\tikzset{snake it/.style={decorate, decoration=snake}}
\theoremstyle{plain}
\newtheorem{thm}{Theorem}[section]
\newtheorem{lem}[thm]{Lemma}
\newtheorem{prop}[thm]{Proposition}
\newtheorem{question}[thm]{Question}
\theoremstyle{definition}
\theoremstyle{remark}
\newtheorem{rmk}[thm]{Remark}
\newcommand{\BC}{{\mathbb{C}}}
\newcommand{\BN}{{\mathbb{N}}}
\newcommand{\BP}{{\mathbb{P}}}
\newcommand{\BQ}{{\mathbb{Q}}}
\newcommand{\BZ}{{\mathbb{Z}}}
\newcommand{\CC}{{\mathcal C}}
\newcommand{\CF}{{\mathcal F}}
\newcommand{\CO}{{\mathcal O}}
\DeclareFontFamily{OT1}{rsfs}{}
\DeclareFontShape{OT1}{rsfs}{n}{it}{<-> rsfs10}{}
\DeclareMathAlphabet{\curly}{OT1}{rsfs}{n}{it}
\newcommand{\git}{\mathbin{
  \mathchoice{/\mkern-6mu/}
    {/\mkern-6mu/}
    {/\mkern-5mu/}
    {/\mkern-5mu/}}}
\begin{document}
\title[On intersection cohomology and Lagrangian fibrations]{On intersection cohomology and Lagrangian fibrations of irreducible symplectic varieties}
\date{\today}

\author[C. Felisetti]{Camilla Felisetti}
\address{University of Trento}
\email{camilla.felisetti@unitn.it}

\author[J. Shen]{Junliang Shen}
\address{Yale University}
\email{junliang.shen@yale.edu}

\author[Q. Yin]{Qizheng Yin}
\address{Peking University}
\email{qizheng@math.pku.edu.cn}

\begin{abstract}
We prove several results concerning the intersection cohomology and the perverse filtration associated with a Lagrangian fibration of an irreducible symplectic variety. We first show that the perverse numbers only depend on the deformation equivalence class of the ambient variety. Then we compute the border of the perverse diamond, which further yields a complete description of the intersection cohomology of the Lagrangian base and the invariant cohomology classes of the fibers. Lastly, we identify the perverse and Hodge numbers of intersection cohomology when the irreducible symplectic variety admits a symplectic resolution. These results generalize some earlier work by the second and third authors in the nonsingular case.
\end{abstract}

\maketitle

\setcounter{tocdepth}{1} 

\setcounter{section}{-1}

\section{Introduction}

We work over the complex numbers $\BC$.

\subsection{Irreducible symplectic varieties}
An irreducible symplectic manifold is a simply connected K\"ahler manifold $M$ with $H^0(M, \Omega_M^2)$ spanned by a nowhere degenerate holomorphic~$2$-form. Irreducible symplectic manifolds can be viewed as higher dimensional analogues of $K3$ surfaces, whose geometry and topology have been studied intensively for decades from different angles. By the Beauville--Bogomolov decomposition, these manifolds form one of the three types of manifolds which are building blocks of compact K\"ahler manifolds with numerically trivial canonical bundles.

The purpose of this article is to study a class of algebraic varieties, called \emph{irreducible symplectic varieties}, which are close to irreducible symplectic manifolds but are allowed to be singular. 

Following \cite[Definition 8.16]{GKP}, we say that $M$ is irreducible symplectic, if $M$ is a normal projective variety with trivial canonical divisor and at worst canonical singularities, such that:
\begin{enumerate}
    \item[(i)] there is a reflexive $2$-form $\sigma$ which is non-degenerate on the regular part $M_{\mathrm{reg}} \subset M$, and
    \item[(ii)] for every finite quasi-\'etale\footnote{This means that the morphism is \'etale in codimension one.} morphism $f: M' \to M$,  the exterior algebra of reflexive forms on $M'$ is generated by the reflexive pullback $f^*\sigma$ of the symplectic form.
\end{enumerate}
Nonsingular irreducible symplectic varieties recover projective irreducible symplectic manifolds \cite[Remark 8.19]{GKP}. Moreover, analogous to the Beauville--Bogomolov decomposition in the nonsingular setting, irreducible symplectic varieties form one of the three building blocks of projective varieties with klt singularities and numerically trivial canonical divisors \cite{GKP, DG, Gue, Dru, GGK, HP}.

We explore cohomological structures of irreducible symplectic varieties with focus on the interaction with the topology of \emph{Lagrangian fibrations}.

\subsection{Perverse filtrations and Lagrangian fibrations}
Let $\pi: X \rightarrow Y$ be a proper morphism. The perverse $t$-structure on the constructible derived category $D_c^b(Y)$ induces an increasing filtration on the \emph{intersection cohomology} $\mathrm{IH}^*(X, \BC)$,
\begin{equation} \label{Perv_Filtration}
    P_0\mathrm{IH}^\ast(X, \BC) \subset P_1\mathrm{IH}^\ast(X, \BC) \subset \dots \subset P_k\mathrm{IH}^\ast(X, \BC) \subset \dots \subset \mathrm{IH}^\ast(X, \BC),
\end{equation}
called the \emph{perverse filtration} associated with $\pi$. Perverse filtrations play important roles in the study of Hitchin systems \cite{dCHM1, dCMS}, irreducible symplectic manifolds \cite{SY, HLSY}, and enumerative geometry \cite{MT, MS}. We refer to Section \ref{Sec1} for a brief review of the subject.

The filtration (\ref{Perv_Filtration}) is governed by the topology of the map $\pi: X\rightarrow Y$. Some important invariants are the \emph{perverse numbers}
\[
^\mathfrak{p}\mathrm{Ih}^{i,j}(\pi)= \dim \mathrm{Gr}^P_i \mathrm{IH}^{i+j}(X, \BC)= \dim \left( P_i \mathrm{IH}^{i+j}(X, \BC)/ P_{i-1}  \mathrm{IH}^{i+j}(X, \BC) \right).
\]

Now we consider Lagrangian fibrations of irreducible symplectic varieties associated with the (reflexive) symplectic form. The following theorem lists a few basic properties of perverse numbers and the topology of Lagrangian fibrations.

\begin{thm}\label{thm1}
Let $M$ and $M'$ be two irreducible symplectic varieties of dimension $2n$ with second Betti numbers $b_2(M)$ and $b_2(M')$ at least $5$. Let $\pi: M \to B$ and $\pi': M' \to B'$ be two Lagrangian fibrations.
\begin{enumerate}
    \item[(a)] If $M$ is deformation equivalent to $M'$, then we have
\[
^\mathfrak{p}\mathrm{Ih}^{i,j}(\pi) = {^\mathfrak{p}\mathrm{Ih}}^{i,j}(\pi').
\]
    \item[(b)] We have \begin{equation}\label{thm2_eq}
{^\mathfrak{p}\mathrm{Ih}^{0,d}(\pi)} = {^\mathfrak{p}\mathrm{Ih}^{d,0}(\pi)} = \begin{cases}
1, & d = 2k;\\
0, & d= 2k+1.
\end{cases}
\end{equation}
\item[(c)] The intersection cohomology of the Lagrangian base $B$ is given by
\[
\mathrm{IH}^d(B, \BC) = 
\begin{cases}
\langle \beta^k \rangle, & d=2k;\\
0, & d=2k+1,
\end{cases}
\]
where $\beta$ is an ample divisor class on $B$; in particular, we have 
\[
\mathrm{IH}^d (B ,\BC) \simeq H^d (\BP^n, \BC);
\]

\item[(d)] The restriction of the intersection cohomology $\mathrm{IH}^d(M, \BC)$ to a nonsingular fiber $M_b$ is given by
\[
\mathrm{Im} \left\{\mathrm{IH}^d(M, \BC) \to H^d(M_b, \BC)\right\} = 
\begin{cases}
\langle \eta^k|_{M_b} \rangle, & d=2k;\\
0, & d=2k+1,
\end{cases}
\]
where $\eta$ is a $\pi$-relative ample divisor class on $M$.\footnote{By \cite[Theorem 3 (3)]{Sch}, a general fiber of a Lagrangian fibration associated with any irreducible symplectic variety is an abelian variety.}

\end{enumerate}

\end{thm}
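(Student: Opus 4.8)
The plan is to reduce everything to the structure of the perverse filtration for $\pi\colon M\to B$ together with the defensive $\mathfrak{sl}_2$-type machinery of the decomposition theorem, exactly as in the nonsingular case treated by Shen--Yin, but now working with intersection cohomology and the reflexive symplectic form. I would first establish (a), since the deformation-invariance of perverse numbers is the technical backbone. The idea is that for irreducible symplectic varieties with $b_2\ge 5$, the recent work on the Beauville--Bogomolov--Fujiki form and the LLV (Looijenga--Lunts--Verbitsky) Lie algebra acting on $\mathrm{IH}^*(M,\BC)$ applies; combined with the fact that Lagrangian fibrations deform in families (the base $B$ being a $\BQ$-factorial deformation of $\BP^n$-like varieties), one gets a local system of intersection cohomologies over the deformation space whose perverse filtration is locally constant. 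Concretely I would invoke the global invariant cycle theorem / Deligne's semisimplicity to see that the perverse Leray filtration is preserved under the parallel transport, so the graded dimensions $^\mathfrak{p}\mathrm{Ih}^{i,j}$ cannot jump.

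Next I would prove (c) and (d) together, as they are essentially dual faces of the same computation. For (c): $B$ is a normal projective variety of dimension $n$ that (by results of Matsushita type, extended to the singular setting) has the rational cohomology — indeed intersection cohomology — of $\BP^n$; the key input is that the Lagrangian condition forces the relative dimension to be $n$ and the base to be Fano-like with $b_2(B)=1$, and then Hodge-theoretic positivity (the hard Lefschetz theorem for intersection cohomology applied to an ample $\beta$) pins down $\mathrm{IH}^d(B,\BC)=\langle\beta^k\rangle$ in even degrees and $0$ in odd degrees. For (d): the image of $\mathrm{IH}^d(M,\BC)\to H^d(M_b,\BC)$ is, by the decomposition theorem, exactly the space of monodromy-invariant classes on the abelian variety fiber $M_b$; one shows these invariants are generated by the relative ample $\eta|_{M_b}$ using that the associated graded of the perverse filtration in the "first column" is controlled by $R^0\pi_*$ and $R^{2n}\pi_*$ only — which is where (b) feeds in.

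For (b) itself, the statement $^\mathfrak{p}\mathrm{Ih}^{0,d}(\pi)={^\mathfrak{p}\mathrm{Ih}^{d,0}(\pi)}$ being $1$ for $d$ even and $0$ for $d$ odd, I would argue as follows. The symmetry ${^\mathfrak{p}\mathrm{Ih}^{0,d}}={^\mathfrak{p}\mathrm{Ih}^{d,0}}$ is the "perverse = Hodge" or more precisely the $P=W$-flavored symmetry coming from relative hard Lefschetz applied to $\eta$, which exchanges the two extreme rows/columns of the perverse diamond. The value in the bottom-left corner $^\mathfrak{p}\mathrm{Ih}^{0,d}$ computes $\dim \mathrm{Gr}^P_0\mathrm{IH}^d(M,\BC)$, which by the description of the lowest piece of the perverse filtration equals $\dim \mathrm{IH}^d(B,\BC)$ — but establishing that identification cleanly is circular with (c), so instead I would run the two together: show $\mathrm{Gr}^P_0\mathrm{IH}^*(M)\cong \mathrm{IH}^*(B)$ as a ring via the pullback $\pi^*$, prove this ring is generated by $\pi^*\beta$ using ampleness and the projection formula, and deduce both (b) and (c) simultaneously. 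Part (a) then upgrades this from a single Lagrangian fibration to the whole deformation class.

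The main obstacle I anticipate is (a): making deformation-invariance work in the \emph{singular} setting requires that intersection cohomology forms a well-behaved local system over the deformation space and that the perverse filtration is compatible with parallel transport, which is subtler than in the smooth case because one cannot simply use $R^q\pi_*\BC$ on a smooth total space. I expect the resolution to go through either a simultaneous resolution argument (replacing $M$ by a symplectic resolution when one exists, then spreading out, then descending) or through the recent theory of intersection cohomology of families and the monodromy-invariance of the decomposition theorem in families. Parts (b), (c), (d) should then follow by Hodge-theoretic arguments that are formally parallel to the nonsingular case, with intersection cohomology substituted throughout and the hard Lefschetz / relative hard Lefschetz theorems for perverse sheaves doing the heavy lifting.
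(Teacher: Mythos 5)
Your high-level architecture (deduce (c) and (d) from (b), identify $\mathrm{Gr}^P_0$ with $\mathrm{IH}^*(B)$ and the top graded piece with the fiber restriction, use relative hard Lefschetz for the symmetry) matches the paper, but the two steps that actually carry the proof are missing, and the routes you propose for them would not work.

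For (a), deformation equivalence of $M$ and $M'$ is equivalence as abstract varieties: it gives you no family of \emph{Lagrangian fibrations} interpolating between $\pi$ and $\pi'$, and indeed a single $M$ can carry several unrelated Lagrangian fibrations. So the ``local system of intersection cohomologies over the deformation space whose perverse filtration is locally constant'' is not available, and global invariant cycles/semisimplicity cannot be invoked for a family that does not exist. The paper's mechanism is entirely different: by the de Cataldo--Migliorini formula (Proposition \ref{Prop1.3}) the perverse filtration of $\pi$ is an explicit kernel/image filtration depending only on the class $L=\pi^*\beta\in H^2(M,\BC)$, which is isotropic for the BBF form by the Fujiki relations; one then shows (Proposition \ref{Prop1.4}) that $\gamma\mapsto\dim P^{\gamma}_k\mathrm{IH}^d$ is a constructible function on the quadric of nonzero isotropic classes which is constant on the monodromy orbits, and these orbits are Zariski dense by Bakker--Lehn (the monodromy group has finite index in $O(\Lambda)$) plus the Borel density theorem. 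This is where the hypothesis $b_2\geq 5$ enters; your proposal never uses it, nor the isotropy of $\pi^*\beta$, which is a sign the key idea is absent.

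For (b), the step ``prove this ring is generated by $\pi^*\beta$ using ampleness and the projection formula'' is exactly the content of the theorem and no argument is offered for it; ampleness gives the lower bound $\dim\mathrm{IH}^{2k}(B)\geq 1$, not the upper bound. The paper's upper bound comes from a genuinely different place: by Proposition \ref{Prop1.4} one may replace the perverse filtration (attached to the isotropic class $\pi^*\beta$) by the filtration $P^{\sigma}$ attached to the isotropic class $\sigma$ of the reflexive symplectic form; the border of that filtration is bounded by the Hodge numbers $\mathrm{Ih}^{d,0}(M)$ and $\mathrm{Ih}^{0,d}(M)$, which are $\leq 1$ in even degree and $0$ in odd degree because $\bigoplus_d H^0(M,\Omega^{[d]}_M)$ is generated by $\sigma$ --- this is the defining property (ii) of irreducible symplectic varieties, combined with the extension theorems of Greb--Kebekus--Kov\'acs--Peternell and Kebekus--Schnell on a resolution. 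The identification $P^{\sigma}_0\mathrm{IH}^*(M,\BC)=\mathrm{IH}^{*,0}(M)$ is then forced by the Lefschetz pair $(\sigma,\overline{\sigma})$, itself established by the same monodromy-density argument. Without these two inputs (monodromy density on the isotropic quadric, and the one-dimensionality of the algebra of reflexive forms) the proposal does not close.
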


When $M$ is nonsingular, it is a folklore conjecture that the Lagrangian base is a projective space $B \simeq \BP^n$. This was confirmed by Hwang \cite{Hw} assuming $B$ nonsingular, and remained wide open in general. See also \cite{Ou, HX, BK} for recent progress.

An interesting phenomenon occurs in the singular case that the base $B$ may fail to be a projective space. In \cite[Theorem 1.9]{MaBase}, Matsushita constructed an irreducible symplectic variety\footnote{The fact that Matsushita's example is irreducible symplectic is verified by \cite[Proposition 2.5]{Per}.} whose Lagrangian base is a Fano variety with quotient singularities. Nevertheless, Theorem \ref{thm1} (c) guarantees that $B$ shares the same intersection cohomology as $\BP^n$. Finding more topological constraints for Lagrangian bases of irreducible symplectic varieties remains a intriguing question.

\begin{rmk}
Mirko Mauri has kindly informed us that the intersection cohomology and ordinary cohomology of $B$ actually coincide, or more precisely, that $\mathrm{IC}_B \simeq \mathbb{Q}_B[n]$ in $D^b_c(B)$. Indeed, by \cite[Theorem 3 (4)]{Sch}, a Lagrangian fibration $\pi: M \to B$ has equidimensional fibers and no fiber is contained in the singular locus of $M$. Hence the argument of \cite[Proposition~1.10 (ii)]{HM} (stated for nonsingular $M$) extends to the singular setting. As noticed in \cite[Remark 1.11]{HM}, it is expected that~$B$ should have finite quotient singularities.
\end{rmk}

Theorem \ref{thm1} was proven when $M$ is nonsingular in \cite{SY,HLSY}. Moreover, in the nonsingular setting, a stronger version of (a) was deduced, namely that perverse numbers are identified with Hodge numbers of the ambient variety $M$. We refer to \cite[Section 0.4.2]{SY} for connections to enumerative geometry of $K3$ surfaces. In the following section, we discuss this ``perverse = Hodge" phenomenon in the singular case.



\subsection{Perverse = Hodge}

For a possibly singular irreducible symplectic variety $M$ with a Lagrangian fibration $\pi: M \to B$, the intersection cohomology $\mathrm{IH}^*(M, \BC)$ carries a pure Hodge structure so that we may consider the associated Hodge numbers
\[
\mathrm{Ih}^{i,j}(M) = h^{i,j}\left( \mathrm{IH}^*(M, \BC)\right) \in \BZ, \quad \textrm{for all }\, i,j \in \BN.
\]
It is a natural question if the ``perverse = Hodge" identity still holds in general.

\begin{question}\label{Q_P=F}
     Is it true that 
    \begin{equation}\label{P=F}
        {^\mathfrak{p}\mathrm{Ih}}^{i,j}(\pi) = \mathrm{Ih}^{i,j}(M)\,?
    \end{equation}
\end{question}

The following result, which generalizes \cite[Theorem 0.2]{SY}, provides an affirmative answer to Question \ref{Q_P=F} when $M$ admits a \emph{symplectic resolution}\footnote{A resolution $f: M' \to M$ is symplectic if $f^*\sigma$ is non-degenerate on $M'$.} (with no restriction on the second Betti number).

\begin{thm}\label{thm0.5}
Assume that $M$ is an irreducible symplectic variety admitting a symplectic resolution and a Lagrangian fibration $\pi: M \to B$, then (\ref{P=F}) holds for $M$.
\end{thm}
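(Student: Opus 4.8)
The strategy is to reduce the statement to the already-established nonsingular case by means of the symplectic resolution. Fix a symplectic resolution $f\colon M'\to M$; then $M'$ is a projective irreducible symplectic manifold of dimension $2n$. Since $f$ is an isomorphism over the dense open locus over which $\pi$ is smooth, a general fibre of the composition $\pi'\colon= \pi\circ f\colon M'\to B$ is the same abelian variety as a general fibre of $\pi$, and it is Lagrangian for $f^*\sigma$; hence $\pi'$ is again a Lagrangian fibration. A symplectic resolution is semismall, so $Rf_*\BC_{M'}[2n]$ is a perverse sheaf on $M$, and the decomposition theorem yields a canonical splitting
\[
Rf_*\BC_{M'}[2n]\ \simeq\ \mathrm{IC}_M\ \oplus\ \CK,\qquad \CK=\bigoplus_{\alpha}\mathrm{IC}_{Z_\alpha}(L_\alpha),
\]
with every $Z_\alpha\subsetneq M$ a proper closed subvariety and every $L_\alpha$ a simple local system on a smooth open of $Z_\alpha$. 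Pushing forward by $R\pi_*$ and using that the perverse truncation functors on $D^b_c(B)$ are additive, one sees that $f^*\colon\mathrm{IH}^*(M,\BC)\hookrightarrow H^*(M',\BC)$ is a split inclusion which is at the same time a morphism of pure polarizable Hodge structures and strictly compatible with the perverse filtrations of $\pi$ and of $\pi'$.

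Writing ${}^{\mathfrak p}h^{i,j}(\CK)$ and $h^{i,j}(\CK)$ for the perverse numbers (with respect to $\pi$) and the Hodge numbers of the complementary summand, the previous paragraph gives
\[
{}^{\mathfrak p}\mathrm{Ih}^{i,j}(\pi')={}^{\mathfrak p}\mathrm{Ih}^{i,j}(\pi)+{}^{\mathfrak p}h^{i,j}(\CK),\qquad h^{i,j}\!\big(H^*(M',\BC)\big)=\mathrm{Ih}^{i,j}(M)+h^{i,j}(\CK).
\]
The nonsingular ``perverse $=$ Hodge'' theorem for Lagrangian fibrations (\cite[Theorem 0.2]{SY}, see also \cite{HLSY}) — whose proof, being an application of the Looijenga--Lunts--Verbitsky algebra of $M'$ together with relative Hard Lefschetz for $\pi'$, only uses that $B$ is a normal base of a Lagrangian fibration with $\dim_{\BC}B=n$ and $\mathrm{IC}_B\simeq\BC_B[n]$ (available here by \cite{Sch} and the Remark above), rather than $B\simeq\BP^n$ — gives ${}^{\mathfrak p}\mathrm{Ih}^{i,j}(\pi')=h^{i,j}(H^*(M',\BC))$. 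Hence the theorem for $M$ is equivalent to the matching statement for the complementary summand, namely ${}^{\mathfrak p}h^{i,j}(\CK)=h^{i,j}(\CK)$.

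The heart of the argument — and the step I expect to be the main obstacle — is therefore to establish ``perverse $=$ Hodge'' for each summand $\mathrm{IC}_{Z_\alpha}(L_\alpha)$. Here one invokes the structure theory of symplectic singularities: the $Z_\alpha$ are closures of symplectic leaves of $M$, along which $M$ is \'etale-locally a product of the leaf with a transverse conical symplectic singularity carrying a symplectic resolution, and $f$ respects this product structure, while the $L_\alpha$ have finite monodromy governed by the fibres of $f$. Combining this with Schwald's equidimensionality theorem for $\pi$, one organizes the summands so that the contribution of each $Z_\alpha$ is controlled by a Lagrangian-type fibration on a symplectic variety of strictly smaller dimension, twisted by the transverse data, which brings the statement within reach of an induction on $\dim M$, the base case $\dim M=0$ being trivial. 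An alternative, and perhaps cleaner, route to the same end is to transport the Looijenga--Lunts--Verbitsky module structure from $H^*(M',\BC)$ directly onto the canonical summand $\mathrm{IH}^*(M,\BC)$ — using that it is cut out by a natural projector and is stable under cup product with $\mathrm{IH}^2(M,\BC)$ and the corresponding Lefschetz duals — and then to run the rotation argument of \cite{SY} verbatim, exchanging the $\eta$-Lefschetz (perverse) filtration with the Hodge filtration; the technical point to be checked in this approach is precisely the $\mathfrak{so}$-stability of that summand.
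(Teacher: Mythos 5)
Your first route sets up the right objects but does not close. The semismallness of $f$, the splitting $Rf_*\mathrm{IC}_{M'} \simeq \mathrm{IC}_M \oplus \CK$, and the additivity of both perverse numbers (push forward by $R\pi_*$ and truncate) and Hodge numbers are all correct, and they do reduce the theorem to the identity ${}^{\mathfrak p}h^{i,j}(\CK) = h^{i,j}(\CK)$ for the complementary summand, granting the nonsingular case for $\pi' = \pi \circ f$. But the proposed proof of that identity is a genuine gap, not a deferred technicality: you would need, for each stratum $Z_\alpha$ with its local system $L_\alpha$, control of both the Hodge structure on $\mathrm{IH}^*(Z_\alpha, L_\alpha)$ and its perverse filtration relative to $\pi|_{Z_\alpha}$, and there is no reason the restriction of $\pi$ to a symplectic leaf closure, twisted by $L_\alpha$, is a Lagrangian fibration of an irreducible symplectic variety to which any established case of the statement (or your induction hypothesis) applies. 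The \'etale-local product structure along leaves does not by itself produce the global fibration-plus-Hodge-theoretic input that the nonsingular theorem consumes. As written, the main route is a reduction to a statement at least as hard as the one being proved.

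Your ``alternative route'' is in fact the paper's proof, and the point you flag as needing verification is exactly its key lemma. The paper shows (Lemma \ref{lem:linalg}) that for $\alpha \in H^2(M,\BC)$ of Lefschetz type the lowering operator $\Lambda_{f^*\alpha}$ preserves the summand $\mathrm{IH}^*(M,\BC) \subset H^*(M',\BC)$ of \eqref{eq:ss} and restricts to $\Lambda_\alpha$; the argument is a direct comparison of primitive decompositions, using that the inclusion is a map of $H^*(M,\BC)$-modules. Combined with Lemma \ref{lem:lef} ($\alpha$ Lefschetz on $M$ $\Leftrightarrow$ $f^*\alpha$ Lefschetz on $M'$ $\Leftrightarrow$ $q_M(\alpha)\neq 0$) and Verbitsky's classification \cite{Ver95}, this makes $\mathrm{IH}^*(M,\BC)$ a module over $\mathfrak g' \simeq \mathfrak{so}(b_2(M)+2)$ and identifies $\mathfrak g(M)$ with $\mathfrak{so}(b_2(M)+2)$ (Theorem \ref{thm0.4}); the rotation argument of \cite{SY}, exchanging the $\pi^*\beta$-filtration with the $\sigma$-filtration inside this semisimple algebra, then goes through verbatim, with the Hodge-filtration input supplied by Verbitsky's theorem on $M'$ and the Hodge-structure splitting \eqref{eq:ss}. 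So the stability you wanted is true and provable in a few lines, but your proposal establishes neither it nor the stratum-by-stratum statement, and hence does not yet constitute a proof by either route.
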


As a key step in the proof of Theorem \ref{thm0.5}, we compute the \emph{Looijenga--Lunts--Verbitsky (LLV) Lie algebra} \cite{LL, Ver95}, which is the structure Lie algebra of the intersection cohomology generated by all Lefschetz $\mathfrak{sl}_2$-triples, of an irreducible symplectic variety admitting a symplectic resolution. This generalizes \cite{Ver95, Ver96} and may be of independent interest. 

\begin{thm} \label{thm0.4}
Let $M$ be an irreducible symplectic variety admitting a symplectic resolution. Then the LLV algebra associated with the intersection cohomology of $M$ is naturally isomorphic to $\mathfrak{so}(b_2(M) +2)$.
\end{thm}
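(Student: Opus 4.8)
The plan is to reduce the computation of the LLV algebra of $\mathrm{IH}^*(M,\BC)$ to the known nonsingular case via the symplectic resolution $f\colon \widetilde M \to M$, where $\widetilde M$ is an irreducible symplectic manifold with $b_2(\widetilde M) = b_2(M)$. The first step is to recall that, since $f$ is a crepant (indeed symplectic) resolution, the decomposition theorem gives a canonical splitting $Rf_* \BC_{\widetilde M}[2n] \cong \mathrm{IC}_M \oplus (\text{shifted summands})$, and that the summand $\mathrm{IC}_M$ is precisely the image of the composite $H^*(\widetilde M,\BC) \to \mathrm{IH}^*(M,\BC) \to H^*(\widetilde M,\BC)$ cut out by the projector associated with the semismall (here, in fact small) map $f$. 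Concretely, $\mathrm{IH}^*(M,\BC)$ is a direct summand of $H^*(\widetilde M, \BC)$ as a graded vector space carrying a Hodge structure, and — this is the key structural input — it is a summand as a module over the LLV algebra $\Fg(\widetilde M) \cong \Fso(b_2(\widetilde M)+2)$ (Verbitsky/Looijenga--Lunts), because pullback $f^*$ intertwines the Lefschetz operators: for any ample $\alpha$ on $M$, $f^*\alpha$ is nef and big on $\widetilde M$, and more usefully any Kähler class on $\widetilde M$ restricts compatibly. So the Lefschetz $\Fsl_2$-triples generating $\Fg(\mathrm{IH}^*(M))$ are restrictions of those generating $\Fg(H^*(\widetilde M))$, giving a Lie algebra map $\Fg(\mathrm{IH}^*(M)) \hookrightarrow \Fso(b_2(M)+2)$ (an embedding since the total Lefschetz operator already acts faithfully).

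The second step is to produce enough Lefschetz operators on $\mathrm{IH}^*(M)$ directly to force the reverse inclusion, i.e. to show $\Fg(\mathrm{IH}^*(M)) = \Fso(b_2(M)+2)$. For this I would use the Looijenga--Lunts--Verbitsky machinery intrinsically on $\mathrm{IH}^*(M)$: the intersection cohomology of a projective variety satisfies the hard Lefschetz theorem and carries a pure polarized Hodge structure, so every ample class $\alpha \in \mathrm{IH}^2(M,\BC)$ gives a Lefschetz $\Fsl_2$-triple $(e_\alpha, h, f_\alpha)$ acting on $\mathrm{IH}^*(M)$. The abstract LLV theorem (as in \cite{LL}) then says that the Lie algebra generated by all such triples, as $\alpha$ ranges over a suitable open cone, is $\Fso(V \oplus U)$ where $V \subset \mathrm{IH}^2(M,\BC)$ is the span of the classes appearing (with its Beauville--Bogomolov-type quadratic form coming from the pairing $\alpha \mapsto \int \alpha^{2n}$ on $\mathrm{IH}^{2}$) and $U$ is a hyperbolic plane recording the grading. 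The point is that $V$ is all of $\mathrm{IH}^2(M,\BC)$: by Theorem \ref{thm1}(a)/the preceding deformation invariance results together with $\dim \mathrm{IH}^2(M) = \dim H^2(\widetilde M) = b_2(M)$ (the small resolution does not change $H^2$, since its fibers have dimension $< n$ hence no exceptional divisors), so $\dim(V \oplus U) = b_2(M) + 2$ and the LLV algebra is $\Fso(b_2(M)+2)$, provided the quadratic form on $\mathrm{IH}^2$ is nondegenerate — which it is, being the restriction of the Beauville--Bogomolov form, or equivalently the form on $H^2(\widetilde M)$, under the summand inclusion.

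The main obstacle I expect is verifying that the LLV formalism genuinely applies to $\mathrm{IH}^*(M,\BC)$ with the correct quadratic form, i.e. that the Fujiki/Beauville--Bogomolov relation $\int_M \alpha^{2n} = c\, q(\alpha)^n$ holds on intersection cohomology and that $q$ agrees with the form transported from $H^2(\widetilde M,\BC)$. For a symplectic resolution this should follow because $f_*f^* = \Id$ on $H^2$ and $f$ is crepant, so top self-intersections of divisor classes are preserved; then the existing Fujiki relation upstairs on $\widetilde M$ descends. A secondary technical point is checking that the Lefschetz triples really do generate — rather than just land in — $\Fso(b_2+2)$: here one invokes the general result that for a Hodge structure of the relevant type (weight-graded, polarized, satisfying hard Lefschetz for an open set of classes spanning a nondegenerate subspace) the LLV algebra is forced to be the full orthogonal Lie algebra, which is exactly the content of \cite{LL} and requires no symplectic input beyond the computation of $\dim \mathrm{IH}^2$ and nondegeneracy of $q$. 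Assembling these, Theorem \ref{thm0.4} follows, and it feeds into Theorem \ref{thm0.5} by pinning down the $\Fso(b_2+2)$-module structure of $\mathrm{IH}^*(M)$, which (as in the nonsingular argument of \cite{SY}) determines the perverse numbers from the Hodge numbers.
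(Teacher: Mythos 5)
Your overall strategy -- sandwiching $\mathfrak{g}(\mathrm{IH}^*(M))$ between the known $\mathfrak{so}$-algebra coming from the resolution (upper bound) and an intrinsic $\mathfrak{so}(b_2+2)$ produced directly on $\mathrm{IH}^*(M)$ (lower bound) -- is the same as the paper's. But two of your supporting claims are false and one key verification is missing, so the argument as written does not go through.

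First, a symplectic resolution $f: M' \to M$ is \emph{semismall} (Kaledin), not small, and in general it \emph{does} have exceptional divisors: in O'Grady's examples $b_2(M') > b_2(M)$ and $\mathrm{IH}^*(M) \subsetneq H^*(M')$. So your assertions that $b_2(\widetilde M) = b_2(M)$ and that ``the small resolution does not change $H^2$'' fail; if the resolution were small the theorem would be an immediate corollary of Verbitsky's and there would be nothing to prove. This breaks your dimension count: the ambient LLV algebra is $\mathfrak{so}(b_2(M')+2)$, which is strictly larger than the target. To get the correct upper bound one must pass to the Lie subalgebra $\mathfrak{g}' \subset \mathfrak{g}(M')$ generated only by the $\mathfrak{sl}_2$-triples of Lefschetz classes in $f^*H^2(M,\BC)$; Verbitsky's argument identifies $\mathfrak{g}' \simeq \mathfrak{so}(b_2(M)+2)$. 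Second, your claim that $\mathrm{IH}^*(M)$ is a direct summand of $H^*(M')$ \emph{as a $\mathfrak{g}(M')$-module} is false for the same reason (cupping with an exceptional divisor class does not preserve the summand); it is only a module over $\mathfrak{g}'$, and even this requires an argument: the raising operators $L_{f^*\alpha}$ preserve $\mathrm{IH}^*(M)$ because the inclusion is one of $H^*(M,\BC)$-modules, but for the lowering operators one must check that $\Lambda_{f^*\alpha}$ restricted to $\mathrm{IH}^*(M)$ equals $\Lambda_\alpha$ (the paper does this via the primitive decomposition, Lemma \ref{lem:linalg}). This is exactly what yields the surjection $\mathfrak{g}' \twoheadrightarrow \mathfrak{g}(M)$, i.e.\ the upper bound.

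For the lower bound, your appeal to ``the abstract LLV theorem'' of \cite{LL} overreaches: the general Looijenga--Lunts formalism does not force the structure Lie algebra of a polarized Lefschetz module to be the full $\mathfrak{so}(V \oplus U)$, and for a general projective variety it is not $\mathfrak{so}(b_2+2)$. What actually produces the lower bound is the Fujiki relation for irreducible symplectic varieties (Schwald) combined with Verbitsky's classification of reduced Lefschetz--Frobenius algebras applied to the subring $\overline{H}^*(M,\BC)$ generated by $H^2(M,\BC)$: its structure Lie algebra $\mathfrak{g}''$ is $\mathfrak{so}(b_2(M)+2)$, and $\mathfrak{g}(M)$ surjects onto it. The conclusion then follows because the composite $\mathfrak{g}' \to \mathfrak{g}(M) \to \mathfrak{g}''$ is an isomorphism of copies of $\mathfrak{so}(b_2(M)+2)$. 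Your proposal gestures at all three pieces but does not supply the semismall decomposition theorem input, the $\Lambda$-compatibility lemma, or the correct source of the $\mathfrak{so}(b_2+2)$ identification, and it relies on the incorrect smallness assumption to glue them together.
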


\subsection{Connection to the moduli of Higgs bundles}
The study of perverse filtrations of Lagrangian fibrations associated with singular irreducible symplectic varieties is partly motivated by the $P=W$ conjecture for singular Higgs moduli spaces.

Let $C$ be a Riemann surface of genus $g\geq 2$. Non-abelian Hodge theory \cite{Simp, Simp95, HT1} induces a canonical diffeomorphism between the moduli space $M_{\mathrm{n,d}}$ of rank $n$ degree $d$ semistable Higgs bundles on $C$ and the corresponding character variety
\[
M'_{n,d} =  \Big{\{}a_k, b_k \in \mathrm{GL}_n,~k=1,2,\dots,g \,| \,\prod_{j=1}^g [a_j, b_j] = \zeta_n^d \cdot \mathrm{Id}_n \Big{\}}\git \mathrm{GL}_n,\quad \zeta_n:=e^{\frac{2\pi \sqrt{-1} }{n}},
\]
where the $\mathrm{GL}_n$-quotient is with respect to the conjugation action. When $\mathrm{gcd}(n,d)=1$, the moduli spaces $M_{n,d}$ and $M'_{n,d}$ are nonsingular. The $P=W$ conjecture by de Cataldo--Hausel--Migliorini \cite{dCHM1} predicts that the perverse filtration associated with the Hitchin fibration~\mbox{$h_{n,d}: M_{n,d} \to A_{n,d}$} matches the double indexed weight filtration associated with the mixed Hodge structure on $M'_{n,d}$. Moreover, this striking phenomenon is expected to hold more generally without the coprime assumption of $n,d$ if we work with intersection cohomology \cite{dCM, FM, Mauri}:
\begin{equation}\label{P=W}
P_k\mathrm{IH}^i(M_{n,d}, \BC) = W_{2k}\mathrm{IH}^i(M'_{n,d}, \BC).
\end{equation}
Now we consider two integers $d_1, d_2$ with
\begin{equation}\label{eqn123}
\mathrm{gcd}(n,d_1) = \mathrm{gcd}(n,d_2).
\end{equation}
The character varieties $M'_{n,d_1}$ and $M'_{n,d_2}$ are Galois conjugate via an automorphism of $\BQ[\zeta_n]$ sending $\zeta_n^{d_1}$ to $\zeta_n^{d_2}$. The Galois conjugation induces an isomorphism preserving the weight filtrations
\[
 W_k\mathrm{IH}^i(M_{n,d_1} , \BC) \xrightarrow{\simeq} W_k\mathrm{IH}^i(M_{n,d_2},\BC).
\]
Hence the $P=W$ conjecture (\ref{P=W}) predicts that as long as (\ref{eqn123}) holds, we have the following identity concerning the perverse numbers
\begin{equation}\label{galois}
   {^\mathfrak{p}\mathrm{Ih}}^{i,j}(h_{n,d_1}) = {^\mathfrak{p}\mathrm{Ih}}^{i,j}(h_{n,d_2}).
\end{equation}

When $\mathrm{gcd}(n,d)\neq 1$, the Higgs moduli space $M_{n,d}$ is a (non-proper) symplectic variety with the Hitchin fibration Lagrangian. A ``compact analogue" (see \cite{DEL, dCMS}) of the Hitchin fibration associated with $M_{n,d}$ is the Beauville--Mukai system 
\[
\pi^S_{\beta, \chi}: M_{\beta, \chi}(S) \to |\CO_S(\beta)|, \quad \CF \mapsto \mathrm{supp}(\CF).
\]
Here $S$ is a $K3$ surface, $\beta$ is a curve class on $S$ with $\beta^2 = 2g-2$, $|\CO_S(\beta)|$ is the linear system, and $M_{\beta, \chi}(S)$ is the moduli of semistable sheaves $\CF$ on $S$ with respect to a general primitive polarization satisfying that 
\[
[\mathrm{supp}(\CF)] = \beta \in H_2(S, \BZ), \quad \chi(\CF) = \chi.
\]
The moduli space $M_{\beta, \chi}(S)$ is irreducible symplectic \cite{PR}. Furthermore, Theorem \ref{thm1} (a) together with \cite[Theorem 1.17]{PR} implies that
\begin{equation}\label{compact_PW}
    {^\mathfrak{p} \mathrm{Ih}^{i,j}\left(\pi^{S_1}_{\beta_1,\chi_1}\right)} = {^\mathfrak{p}\mathrm{Ih}^{i,j}\left(\pi^{S_2}_{\beta_2,\chi_2}\right)}
\end{equation}
as long as 
\[
\beta_1^2 = \beta_2^2 = 2g-2, \quad \mathrm{gcd}(\mathrm{div}(\beta_1), \chi_1) = \mathrm{gcd}(\mathrm{div}(\beta_2), \chi_2).
\]
Here $\mathrm{div}(-)$ stands for the divisibility of the curve class. The identity (\ref{compact_PW}) can be viewed as a compact analogue of the identity (\ref{galois}) predicted by the $P=W$ conjecture and Galois conjugation.


\subsection{Acknowledgements}
Our original motivation was to rewrite and generalize some results in \cite{SY} to the singular setting using monodromy symmetries. Similar monodromy arguments have appeared in the seminar notes of Huybrechts--Mauri \cite{HM}. We sincerely thank Mirko Mauri for his careful reading of the manuscript and for numerous suggestions. We are also grateful to Davesh Maulik, Giovanni Mongardi, Arvid Perego, and Chuanhao Wei for helpful discussions.

C.~F.~is supported by PRIN 2017 “Moduli Theory and Birational Classification” and GNSAGA. J.~S.~is supported by the NSF grant DMS 2134315. Q.~Y.~is supported by the NSFC grants 11831013 and 11890661.

\section{Perverse filtrations and Proof of Theorem \ref{thm1} (a)}\label{Sec1}

\subsection{Perverse filtrations} We briefly review the perverse filtration associated with a projective map $\pi: X \to Y$. Throughout, we assume that $\pi$ has equidimensional fibers with $\dim X = 2 \dim Y = 2n$ for convenience.  


For $k \in \BZ$, let $^\mathfrak{p}\tau_{\leq k}$ be the truncation functor associated with the perverse $t$-structure. Given an object $\CC \in D_c^b(Y)$ in the truncated derived category of constructible sheaves, there is a natural morphism $^\mathfrak{p}\tau_{\le k}\CC \rightarrow \CC$. For $\pi:X \to Y$, we thus obtain the morphism
\[
^\mathfrak{p}\tau_{\le k}R\pi_\ast \mathrm{IC}_X[-n] \rightarrow R\pi_\ast \mathrm{IC}_X[-n],
\]
which further induces a morphism of (hyper-)cohomology groups
\begin{equation}\label{perv_filt}
H^{d-n}\Big{(}Y, \, ^\mathfrak{p}\tau_{\le k}(R\pi_\ast \mathrm{IC}_X[-{n}]) \Big{)} \rightarrow \mathrm{IH}^d(X, \BC).
\end{equation}
The $k$-th piece of the perverse filtration is defined to be the image of (\ref{perv_filt}). 


Since every fiber of $\pi$ has dimension $n$, the functor $R\pi_*[-n]$ is perverse left $t$-exact and the functor
\[
R\pi_* [n] = R\pi_! [n]
\]
is perverse right $t$-exact. Consequently, the perverse filtration starts with the $0$-th piece and terminates at the $2n$-th piece. In particular, the perverse number ${^\mathfrak{p}\mathrm{Ih}}^{i,j}(\pi)$ is nontrivial only~if 
\begin{equation*}\label{ineq}
0 \leq i \leq 2n, \quad 0 \leq j \leq 2n.
\end{equation*}
Furthermore, the hard Lefschetz theorems for perverse cohomology groups provide the symmetry
\[
{^\mathfrak{p}\mathrm{Ih}}^{i,j}(\pi) = {^\mathfrak{p}\mathrm{Ih}}^{2n-i,j}(\pi) = {^\mathfrak{p}\mathrm{Ih}}^{i,2n-j}(\pi).
\]


\subsection{Perverse filtration for projective bases}
We review another description of the perverse filtration associated with $\pi: X \to Y$ when $Y$ is projective.

We fix $\beta$ to be an ample class on $Y$, and we consider
\[
L = \pi^* \beta \in H^2(X, \BC).
\]
The class $L$ acts on $\mathrm{IH}^*(X, \BC)$ as a nilpotent operator via cup product. The following proposition shows that the filtration (\ref{Perv_Filtration}) is completely described by an ample class on the base.  

\begin{prop}[{\cite[Proposition 5.2.4]{dCM0}}] \label{Prop1.3}
We have
\[
P_k\mathrm{IH}^d(X, \BC) = \sum_{i\geq 1} \left(
\mathrm{Ker}(L^{n+k+i-d}) \cap \mathrm{Im}(L^{i-1}) \right) \cap \mathrm{IH}^d(X, \BC).
\]
\end{prop}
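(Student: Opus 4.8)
The plan is to prove this the way de Cataldo--Migliorini do: deduce it formally from the decomposition theorem together with hard Lefschetz for perverse sheaves. First I would apply the decomposition theorem to the complex $K := R\pi_\ast\mathrm{IC}_X[-n]$, obtaining a (non-canonical) splitting $K \simeq \bigoplus_j \CP_j[-j]$ in $D^b_c(Y)$, where $\CP_j := {}^{\mathfrak p}\CH^j(K)$ is a semisimple perverse sheaf; by the $t$-exactness of $R\pi_\ast[\pm n]$ recalled above, $\CP_j = 0$ unless $0 \le j \le 2n$. This identifies $\mathrm{IH}^d(X,\BC) = \bigoplus_j H^{d-n-j}(Y,\CP_j)$, and unwinding the definition of the perverse filtration as the image of $H^{d-n}\big(Y,\, {}^{\mathfrak p}\tau_{\le k}K\big) \to H^{d-n}(Y,K)$ (an image, hence independent of the chosen splitting, and for a split complex equal to the evident sub-sum) gives $P_k\mathrm{IH}^d(X,\BC) = \bigoplus_{j \le k} H^{d-n-j}(Y,\CP_j)$.

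Next I would use that $L = \pi^\ast\beta$ acts on $\mathrm{IH}^\ast(X,\BC) = H^\ast(Y,K)$ through the $H^\ast(Y)$-module structure, i.e.\ as cup product with $\beta \in H^2(Y,\BC)$; hence $L$ is block-diagonal for the above decomposition --- it preserves each $H^\ast(Y,\CP_j)$ and acts there as $\beta\cup(-)$, \emph{without changing the perverse degree} $j$ (in contrast to a $\pi$-relatively ample class). On each block, since $Y$ is projective, $\beta$ is ample, and $\CP_j$ is a semisimple perverse sheaf coming from $R\pi_\ast\mathrm{IC}_X$, hard Lefschetz for perverse sheaves endows $H^\ast(Y,\CP_j)$ with an $\mathfrak{sl}_2$-module structure with $L$ the raising operator, concentrated in degrees $|a| \le n$ and centered at $0$, so that $L^a \colon H^{-a}(Y,\CP_j) \xrightarrow{\simeq} H^a(Y,\CP_j)$ for $a \ge 0$.

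Finally, since $\mathrm{Ker}(L^\bullet)$ and $\mathrm{Im}(L^\bullet)$ are compatible with the block decomposition, the asserted formula reduces to the following claim for each $j$, where $a := d-n-j$:
\[
\sum_{i \ge 1}\Big(\mathrm{Ker}\big(L^{n+k+i-d}\big) \cap \mathrm{Im}\big(L^{i-1}\big)\Big) \cap H^{a}(Y,\CP_j) =
\begin{cases}
H^{a}(Y,\CP_j), & j \le k,\\
0, & j > k.
\end{cases}
\]
This is pure $\mathfrak{sl}_2$-representation theory: decompose $H^\ast(Y,\CP_j)$ into Lefschetz-primitive irreducibles; a class in degree $a$ lying in the irreducible generated by a primitive of degree $a-2r$ lies in $\mathrm{Im}(L^{i-1})$ exactly when $r \ge i-1$, and in $\mathrm{Ker}(L^{n+k+i-d})$ exactly when $n+k+i-d$ exceeds the number of available raising steps out of degree $a$, which rearranges to $r \le k+i-j-1$. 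So the $i$-th term contributes precisely the primitive summands with $i-1 \le r \le k+i-j-1$; this interval is nonempty for some $i \ge 1$ iff $j \le k$, in which case its union over all $i \ge 1$ covers every occurring value of $r$ and hence all of $H^{a}(Y,\CP_j)$.

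The main --- and essentially the only --- work is this last piece of bookkeeping: keeping straight the shift $[-n]$ in $\mathrm{IC}_X[-n]$, the centering of perverse hard Lefschetz, and the relation $d = a+n+j$, so that the condition ``$\exists\, i \ge 1$'' comes out to exactly ``$j \le k$''. I would pin down the normalization by first checking the formula on the product case $X = Y \times F$ (where $\CP_j \simeq H^j(F,\BC)\otimes \BC_Y[n]$ and the claim becomes the classical Lefschetz statement on $H^\ast(Y,\BC)$). Everything else is a direct invocation of the decomposition theorem (valid for $\mathrm{IC}_X$) and of hard Lefschetz for perverse sheaves.
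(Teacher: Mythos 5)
The paper does not prove this proposition at all: it is quoted verbatim from de Cataldo--Migliorini \cite[Proposition 5.2.4]{dCM0}, so there is no in-paper argument to compare against. Your reconstruction is correct and follows the same strategy as the cited source: the decomposition theorem identifies $P_k\mathrm{IH}^d$ with $\bigoplus_{j\le k}H^{d-n-j}(Y,\CP_j)$; the key observation that $L=\pi^*\beta$ acts block-diagonally (preserving the perverse degree $j$, since $\beta\otimes\mathrm{id}_K$ is diagonal for any splitting of $K$) together with hard Lefschetz for the semisimple perverse summands reduces everything to $\mathfrak{sl}_2$-bookkeeping, and your computation of the range $i-1\le r\le k+i-j-1$ and its union over $i\ge 1$ is exactly right. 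The only point worth making explicit in a write-up is the convention $\mathrm{Ker}(L^m)=0$ and $\mathrm{Im}(L^m)=\mathrm{everything}$ for $m\le 0$, which is needed for the formula to parse when $n+k+i-d\le 0$; your inequalities remain consistent with this convention.
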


\subsection{Isotropic classes}
Compared to the moduli of Higgs bundles, the perverse filtration of an irreducible symplectic variety $M$ is more manageable due to the fact that, in view of Proposition \ref{Prop1.3}, it can be completely described by an \emph{isotropic} class with respect to the (complexified) \emph{Beauville--Bogomolov--Fujiki (BBF) quadratic form} \cite{Nam,Kir,Sch}
\[
q_M: H^2(M, \BC) \to \BC,
\]
where the monodromy symmetries come into play. This circle of ideas also plays a key role in the recent progress on the $P=W$ conjecture concerning perverse filtrations of Hitchin fibrations \cite{dCMS}.

Let $M$ be an irreducible symplectic variety of dimension $2n$. Any class $\gamma \in H^2(M, \BC)$ acts on the intersection cohomology $\mathrm{IH}^*(M, \BC)$ via cup product as a nilpotent operator. We say that $\gamma \in H^2(M, \BC)$ is isotropic if $q_M(\gamma) = 0$. In view of Proposition \ref{Prop1.3}, we define for any isotropic class $\gamma$ an increasing filtration 
\begin{equation}\label{filtration}
P^\gamma_k\mathrm{IH}^d(M, \BC) = \sum_{i\geq 1} \left(
\mathrm{Ker}(\gamma^{n+k+i-d}) \cap \mathrm{Im}(\gamma^{i-1}) \right) \cap \mathrm{IH}^d(M, \BC).
\end{equation}
In particular, for a Lagrangian fibration $\pi: M \to B$, if $\gamma$ is given by the pullback of an ample class on $B$, then the Fujiki relations \cite[Theorem 2]{Sch} imply that $\gamma$ is an isotropic class. Consequently, (\ref{filtration}) (with different choices of $\gamma$) recovers the perverse filtrations associated with any Lagrangian fibrations $\pi: M \to B$.

\subsection{Proof of Theorem \ref{thm1} (a)}

By the discussion above, Theorem \ref{thm1} (a) is a consequence of the following more general statement concerning the filtrations (\ref{filtration}).

\begin{prop}\label{Prop1.4}
Assume $b_2(M) \geq 5$. Let $\gamma_1, \gamma_2 \in H^2(M, \BC)$ be two nonzero isotropic classes. Then we have
\[
\dim P^{\gamma_1}_k\mathrm{IH}^d(M, \BC) = \dim P^{\gamma_2}_k\mathrm{IH}^d(M, \BC).
\]
\end{prop}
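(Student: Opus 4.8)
The plan is to regard $[\gamma] \mapsto \dim P^\gamma_k\mathrm{IH}^d(M,\BC)$ as a constructible function on the isotropic quadric in $\BP\big(H^2(M,\BC)\big)$, to observe that it is invariant under the monodromy group of $M$, and then to conclude by a density argument once one knows that every monodromy orbit of a nonzero isotropic class is Zariski-dense. First note that the formula (\ref{filtration}) only involves the subspaces $\mathrm{Ker}(\gamma^m)$ and $\mathrm{Im}(\gamma^m)$, which are unchanged upon rescaling $\gamma$ by a nonzero scalar; hence each assignment $f_{k,d}\colon [\gamma] \mapsto \dim P^\gamma_k\mathrm{IH}^d(M,\BC)$ descends to a function on $\BP\big(H^2(M,\BC)\big)$, and we consider its restriction to the projectivized isotropic cone $Q = \{[\gamma] : q_M(\gamma) = 0\}$. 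Since $b_2(M)\ge 5$ and the BBF form $q_M$ is non-degenerate, $Q$ is an irreducible projective variety.

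Next, $f_{k,d}$ is constructible on $Q$. Over $H^2(M,\BC)$ the cup-product operators $\gamma\cup(-)$ on the spaces $\mathrm{IH}^e(M,\BC)$ form an algebraic family of nilpotent endomorphisms, so for each $m$ the kernels and images of the powers $\gamma^m$, and then their intersections and the spans over $i\ge 1$ occurring in (\ref{filtration}), form an algebraic family of subspaces of the fixed vector space $\mathrm{IH}^d(M,\BC)$ parametrized by $\gamma$ — each built from loci cut out by polynomial equations in $\gamma$ together with projections thereof. By Chevalley's theorem the dimension of the members of such a family is a constructible function of the parameter. Hence each $f_{k,d}$ is constructible, and so is the locus $S\subseteq Q$ where $f_{k,d}(\gamma) = f_{k,d}(\gamma_1)$ for all $k$ and $d$.

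Now bring in monodromy. The monodromy group $\mathrm{Mon}(M)$ acts on $\mathrm{IH}^\ast(M,\BC)$ compatibly with its action on $H^2(M,\BC)$ — i.e.\ $g(\gamma\cup x) = (g\gamma)\cup(gx)$ — and preserves $q_M$, so $g\big(P^\gamma_k\mathrm{IH}^d\big) = P^{g\gamma}_k\mathrm{IH}^d$; thus $f_{k,d}$ is constant on $\mathrm{Mon}(M)$-orbits and $S$ is $\mathrm{Mon}(M)$-stable. Because $\mathrm{Mon}(M)$ is Zariski-dense in $\mathrm{O}\big(H^2(M,\BC),q_M\big)$ and this orthogonal group acts transitively on $Q$ by Witt's theorem, the closure of any $\mathrm{Mon}(M)$-orbit in $Q$ is stable under $\mathrm{O}(q_M)$ and hence is all of $Q$: every monodromy orbit is Zariski-dense. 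Consequently the constructible set $S$ contains the dense orbit $\mathrm{Mon}(M)\cdot[\gamma_1]$, so it contains a dense open $U\subseteq Q$; and for an arbitrary nonzero isotropic $\gamma_2$ the orbit $\mathrm{Mon}(M)\cdot[\gamma_2]$ is dense, hence meets $U\subseteq S$, so $[\gamma_2]\in S$ as $f_{k,d}$ is constant along that orbit. This is the asserted equality $\dim P^{\gamma_1}_k\mathrm{IH}^d(M,\BC) = \dim P^{\gamma_2}_k\mathrm{IH}^d(M,\BC)$.

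The main obstacle is the monodromy input in the third paragraph: that for a possibly singular irreducible symplectic variety the monodromy action extends to intersection cohomology compatibly with cup product — through the local systems $R^\ast\pi_\ast\mathrm{IC}$ over bases of locally trivial deformations — and, more seriously, that its image is Zariski-dense in $\mathrm{O}(q_M)$; this is the point at which $b_2(M)\ge 5$ is genuinely needed. One should also take care that the step ``constant on a Zariski-dense set implies constant'' is legitimate only because \emph{every} monodromy orbit, not merely one, is Zariski-dense, so that each orbit meets the good open locus $U$.
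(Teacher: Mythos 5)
Your proof follows essentially the same route as the paper's: monodromy invariance of the filtration, Zariski density of the monodromy group in the (special) orthogonal group of the BBF form (via Bakker--Lehn plus the Borel density theorem, which is where $b_2(M)\ge 5$ enters), and constructibility of the dimension function on the single orbit of nonzero isotropic classes. The only cosmetic differences are that you work on the projectivized quadric with the full orthogonal group while the paper works with the $\mathrm{SO}(\Lambda_\BC)$-orbit of nonzero isotropic vectors, and your handling of the ``constant on a Zariski-dense subset'' step is, if anything, slightly more careful than the paper's one-line conclusion.
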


\begin{proof}
Let $\Lambda$ be a lattice isomorphic to the second cohomology $H^2(M, \BZ)$ endowed with the BBF form $q_M$.\footnote{{Notice that $H^2(M, \BZ)$ is torsion-free. In fact, by the universal coefficient formula, the torsion part of~$H^2(M,\BZ)$ comes from $H_1(M,\BZ)$, which is $0$ by \cite[Corollary 13.3]{GGK}.}} We denote by $G_M$ the monodromy group of the $\Lambda$-marked irreducible symplectic varieties deformation equivalent to $M$. By \cite[Theorem~1.1~(1)]{BL}, we have
\begin{equation}\label{subgroup}
G_M \subset O(\Lambda) \subset O(\Lambda_\BC)
\end{equation}
where the first inclusion is given by a subgroup of finite index. We consider the subgroups of~\eqref{subgroup} contained in the connected component $\mathrm{SO}(\Lambda_\BC) \subset O(\Lambda_\BC)$:
\[
G_M^\circ = G_M\cap \mathrm{SO}(\Lambda) \subset \mathrm{SO}(\Lambda) \subset \mathrm{SO}(\Lambda_\BC)
\]
where the first inclusion is a finite index subgroup and the second inclusion is a Zariski dense subset by the Borel density theorem. In particular, we obtain that $G_M^\circ$ is Zariski dense in~$\mathrm{SO}(\Lambda_\BC)$.

Now we consider the action of the monodromy group, which induces the symmetry of dimensions
\begin{equation}\label{sym1}
\dim P^{\gamma_i}_k\mathrm{IH}^d(M, \BC) = \dim P^{g\gamma_i}_k\mathrm{IH}^d(M, \BC), \quad \textrm{for all }\, g\in G^\circ_M, \, i = 1,2.
\end{equation}
Since both $\gamma_1$ and $\gamma_2$ are isotropic classes, they lie in the same orbit $\Omega$ of the natural $\mathrm{SO}(\Lambda_\BC)$ action on $H^2(M, \BC)$. By the definition (\ref{filtration}),  the function on $\Omega$:
\begin{equation}\label{function}
\gamma \in \Omega \mapsto \dim P^\gamma_k\mathrm{IH}^d(M, \BC)
\end{equation}
which is expressed in terms of kernels and images of the operators $\cup \gamma^j$, is constructible on $\Omega$. Moreover, by (\ref{sym1}), it takes constant values on the sets
\[
S_1 = \{g\gamma_1 \in \Omega \, |\,g\in G^\circ_M\},\quad S_2 = \{g\gamma_2 \in \Omega\,|\,g\in G^\circ_M\}.
\]
Since both $S_i$ are Zariski dense in $\Omega$, we conclude that the function (\ref{function}) has to be constant. This completes the proof of the proposition.
\end{proof}


\section{Lagrangian fibrations and Proof of Theorem \ref{thm1} (b,c,d)}

\subsection{Proof of Theorem \ref{thm1} (b,c,d)}

It is explained in \cite[Section 3.2]{SY} that (c,d) are consequences of $(b)$.\footnote{Since we work with possibly singular varieties, the only change for \cite[Section 3.2]{SY} is to replace the trivial local system by the (shifted) intersection cohomology complex $\mathrm{IC}_M[-2n]$, and all the arguments work identically.} More precisely, for a Lagrangian fibration $\pi: M \to B$ with $M_b$ a nonsingular fiber, the argument in \cite[Section 3.2]{SY} actually shows the inequalities
\[
\dim \mathrm{IH}^d(B, \BC) \leq {^\mathfrak{p}\mathrm{Ih}}^{0,d}(\pi), \quad \mathrm{Im} \left\{\mathrm{IH}^d(M, \BC) \to H^d(M_b, \BC)\right\}  \leq {^\mathfrak{p}\mathrm{Ih}}^{d,0}(\pi).
\]
On the other hand, by considering the powers of an ample class on $B$ and a $\pi$-relative ample class on $M$, we clearly have 
\[
\dim \mathrm{IH}^d(B, \BC) \geq
\begin{cases}
 1 , & d=2k;\\
0, & d=2k+1,
\end{cases} 
\]
and
\[
\dim \mathrm{Im} \left\{\mathrm{IH}^d(M, \BC) \to H^d(M_b, \BC)\right\} 
\geq
\begin{cases}
 1 , & d=2k;\\
0, & d=2k+1.
\end{cases}
\]
Hence Theorem \ref{thm1} (b,c,d) are all deduced from the following weaker version of (b).

\begin{prop}\label{prop22}
We have
\[
{^\mathfrak{p}\mathrm{Ih}}^{0,d}(\pi) \leq
\begin{cases}
 1 , & d=2k;\\
0, & d=2k+1,
\end{cases} \quad
{^\mathfrak{p}\mathrm{Ih}}^{d,0}(\pi) \leq
\begin{cases}
 1 , & d=2k;\\
0, & d=2k+1.
\end{cases}
\]
\end{prop}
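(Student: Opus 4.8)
The plan is to prove the statement about perverse numbers by translating it, via the description of the perverse filtration in Proposition \ref{Prop1.3} (or rather the intrinsic reformulation \eqref{filtration}), into a statement about the action of an isotropic class $\gamma = \pi^*\beta$ on $\mathrm{IH}^*(M, \BC)$, and then exploiting monodromy invariance exactly as in the proof of Proposition \ref{Prop1.4}. Concretely, ${^\mathfrak{p}\mathrm{Ih}}^{0,d}(\pi) = \dim P^\gamma_0 \mathrm{IH}^d(M,\BC)$ and, by the hard Lefschetz symmetry ${^\mathfrak{p}\mathrm{Ih}}^{d,0}(\pi) = {^\mathfrak{p}\mathrm{Ih}}^{2n-d,0}(\pi) = {^\mathfrak{p}\mathrm{Ih}}^{2n-d,2n}(\pi)$, the two halves of the proposition are related; I expect to reduce everything to bounding $\dim P^\gamma_0\mathrm{IH}^d(M,\BC)$, which by \eqref{filtration} equals $\dim\big(\mathrm{Ker}(\gamma^{n-d}) \cap \mathrm{IH}^d(M,\BC)\big)$ for $d \le n$ (the lowest piece of the perverse filtration), i.e.\ the ``primitive-type'' part killed by the appropriate power of $\gamma$.

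The key steps, in order: (1) Reduce to showing $\dim P^\gamma_0\mathrm{IH}^d(M,\BC) \le 1$ for $d$ even and $= 0$ for $d$ odd, where $\gamma$ is the (isotropic) pullback of an ample class on $B$; by Proposition \ref{Prop1.4} this dimension is independent of the choice of nonzero isotropic class, so we are free to pick $\gamma$ conveniently, or to argue for a generic isotropic class. (2) Identify $P^\gamma_0\mathrm{IH}^d(M,\BC)$ concretely: it is the piece of $\mathrm{IH}^d$ supported (as a perverse Leray contribution) over the base, and the natural candidate source is $\mathrm{IH}^d(B,\BC)$ mapping in via $\pi^*$ on intersection complexes — but since $B$ is a priori only known to be a normal projective variety of dimension $n$ with $\mathrm{IH}^*(B)$ satisfying hard Lefschetz for the ample class $\beta$, one gets $\dim P^\gamma_0 \mathrm{IH}^{2k}(M,\BC) \ge 1$ for $0 \le k \le n$. (3) For the upper bound, use that the lowest perverse piece ${^\mathfrak p}R^0\pi_*\mathrm{IC}_M$ is a perverse sheaf on $B$ which is supported on a dense open subset where $\pi$ is a smooth abelian fibration (using \cite{Sch} that the generic fiber $M_b$ is an abelian variety), on which it restricts to the invariant part $H^d(M_b,\BC)^{\pi_1}$; the monodromy-invariant cohomology of the abelian fiber, combined with the global constraint that $\gamma$ is isotropic and acts with a single Jordan block structure on this invariant part (Fujiki relations), forces the invariants to be at most one-dimensional in each degree $2k$ and zero in odd degree. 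Then transport this back through Proposition \ref{Prop1.4} and hard Lefschetz to get both inequalities in the proposition.

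The main obstacle I anticipate is step (3): in the singular setting one cannot simply cite the nonsingular computation of \cite{SY,HLSY} verbatim, because the relevant statement — that the cohomology of the intersection complex restricted to the smooth locus of the fibration carries no more ``invariant'' classes than $\langle \eta|_{M_b}\rangle$ — relies on understanding the decomposition theorem for $R\pi_*\mathrm{IC}_M$ and on the fact that $\mathrm{IC}_M$ restricts to the constant sheaf on the smooth part of the total space. The footnote in the excerpt already flags that the argument of \cite[Section 3.2]{SY} goes through after replacing the trivial local system by $\mathrm{IC}_M[-2n]$, so the real content is verifying that (a) $\mathrm{IC}_M$ is the shifted constant sheaf near a generic smooth fiber (true since $M$ is smooth in codimension $\ge 2$ and no fiber lies in $M_{\mathrm{sing}}$, again by \cite{Sch}), and (b) the hard Lefschetz package for $\gamma = \pi^*\beta$ acting on $\mathrm{IH}^*(M)$ is available, which follows from the decomposition theorem for $\mathrm{IC}_M$ (a pure Hodge module). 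Granting these, the combinatorial count of invariant classes on an abelian variety under the full monodromy is standard and yields exactly the claimed bound; the bulk of the write-up will be checking that the citations to \cite{Sch} legitimately supply the equidimensionality, the abelian generic fiber, and the non-containment of fibers in the singular locus needed to run the \cite{SY} argument unchanged.
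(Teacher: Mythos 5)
Your approach diverges fundamentally from the paper's, and the divergence is exactly where the gap lies. The paper's proof of Proposition \ref{prop22} does not analyze the fibration $\pi$ at all: it invokes Proposition \ref{Prop1.4} (monodromy density on $H^2(M,\BC)$, which is why $b_2(M)\geq 5$ is assumed) to replace the isotropic class $\pi^*\beta$ by the class $\sigma\in H^{2,0}(M)$ of the reflexive symplectic form; it bounds $\mathrm{Ih}^{0,d}(M)$ and $\mathrm{Ih}^{d,0}(M)$ by $1$ (even $d$) and $0$ (odd $d$) via a resolution and the extension theorems for reflexive forms (Lemma \ref{lem2.2}); and it then proves $P^\sigma_0\mathrm{IH}^*(M,\BC)=\mathrm{IH}^{*,0}(M)$ and $\dim\mathrm{Gr}^\sigma_d\mathrm{IH}^d(M,\BC)\leq \mathrm{Ih}^{0,d}(M)$ using the Lefschetz pair $(\sigma,\overline{\sigma})$ from Proposition \ref{Prop2.3}. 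Everything happens in the Hodge theory of $M$ itself; the base $B$ and the fibers $M_b$ never enter.

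The genuine gap is your step (3). You assert that the monodromy invariants of $H^d(M_b,\BC)$ are at most one-dimensional in even degree and vanish in odd degree, calling this a ``standard combinatorial count'' forced by the Fujiki relations. It is neither standard nor a consequence of the Fujiki relations: those relations constrain top intersection numbers of classes in $H^2(M,\BC)$, whereas the monodromy group of the local system $R^d(\pi|_U)_*\BC$ on the smooth locus is not known a priori to be large --- if it were trivial, all of $H^d(M_b,\BC)$ would be invariant. The statement you need is essentially Theorem \ref{thm1}(d), which the paper \emph{deduces from} Proposition \ref{prop22}, so your argument is circular; the same applies to your step (2), where identifying $P_0\mathrm{IH}^d(M,\BC)$ with (roughly) $\mathrm{IH}^d(B,\BC)$ forces you to bound the latter, which is Theorem \ref{thm1}(c), again a consequence rather than an input. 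A further error: the symmetries ${^\mathfrak{p}\mathrm{Ih}}^{i,j}(\pi)={^\mathfrak{p}\mathrm{Ih}}^{2n-i,j}(\pi)={^\mathfrak{p}\mathrm{Ih}}^{i,2n-j}(\pi)$ do not exchange ${^\mathfrak{p}\mathrm{Ih}}^{0,d}$ with ${^\mathfrak{p}\mathrm{Ih}}^{d,0}$, so you cannot reduce both inequalities to bounding $P^\gamma_0$; the second concerns the top graded piece $\mathrm{Gr}_d\mathrm{IH}^d$ and requires its own argument (in the paper, the containment $\mathrm{Ker}(\sigma^n)\subset P^\sigma_{d-1}\mathrm{IH}^d$ plus a Hodge-type count).
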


We prove Proposition \ref{prop22} in Section \ref{Sec2.4} which completes the proof of Theorem \ref{thm1}.  \qed

\subsection{Reflexive symplectic forms and filtrations}
Let $M$ be an irreducible symplectic variety with a reflexive symplectic form $\sigma$. We denote by $\Omega_M^{[d]}$ the sheaf of reflexive $d$-forms, \emph{i.e.},
\[
\Omega_M^{[d]} = j_*\Omega^d_{M_{\mathrm{reg}}},
\]
where $j: M_{\mathrm{reg}} \hookrightarrow M$ is the open embedding of the regular part. We first bound the border of the Hodge diamond of the intersection cohomology of $M$.

\begin{lem}\label{lem2.2}
We have
\[
\mathrm{Ih}^{0,d}(M) \leq \begin{cases}
1, & d=2k;\\
0, & d= 2k+1,
\end{cases} \quad
 \mathrm{Ih}^{d,0}(M) \leq \begin{cases}
1, & d=2k;\\
0, & d= 2k+1.
\end{cases}
\]
\end{lem}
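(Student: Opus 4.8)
The plan is to bound the Hodge numbers $\mathrm{Ih}^{0,d}(M)$ and $\mathrm{Ih}^{d,0}(M)$ by identifying the corresponding graded pieces of the Hodge structure on intersection cohomology with cohomology of sheaves of reflexive forms on $M$, and then showing these spaces are at most one-dimensional. The key input is the Hodge-theoretic description of intersection cohomology of varieties with only canonical (in particular, klt) singularities: by the work on Hodge theory of intersection cohomology of such varieties, there is a decomposition $\mathrm{IH}^d(M,\BC) = \bigoplus_{p+q=d} \mathrm{IH}^{p,q}(M)$ with $\mathrm{IH}^{p,q}(M) \simeq \mathrm{IH}^{q,p}(M)$, and the extreme pieces are computed by reflexive differentials, namely $\mathrm{IH}^{d,0}(M) \simeq H^0(M, \Omega_M^{[d]})$ and dually $\mathrm{IH}^{0,d}(M) \simeq \overline{H^0(M, \Omega_M^{[d]})}$. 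So the statement reduces to the purely sheaf-theoretic bound $\dim H^0(M, \Omega_M^{[d]}) \leq 1$ for $d$ even and $= 0$ for $d$ odd.

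Next I would exploit condition (ii) in the definition of an irreducible symplectic variety: the exterior algebra of global reflexive forms on $M$ (and on every finite quasi-\'etale cover, though we only need $M$ itself) is generated by the reflexive symplectic form $\sigma \in H^0(M, \Omega_M^{[2]})$. Concretely, $\bigoplus_d H^0(M, \Omega_M^{[d]})$ is the $\BC$-algebra generated by $\sigma$ under the wedge product of reflexive forms. Since $\sigma$ is non-degenerate on $M_{\mathrm{reg}}$, the powers $\sigma^{\wedge k}$ are nonzero for $0 \le k \le n$ (they are nonvanishing on the regular part), so $H^0(M, \Omega_M^{[2k]})$ contains $\langle \sigma^{\wedge k}\rangle$ and hence is at least one-dimensional for $0 \le k \le n$; meanwhile any reflexive $d$-form with $d$ odd would have to be a polynomial in $\sigma$ of odd degree, which is impossible since $\sigma$ has even degree, forcing $H^0(M,\Omega_M^{[2k+1]}) = 0$. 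It remains to see that $H^0(M, \Omega_M^{[2k]})$ is exactly one-dimensional, i.e. spanned by $\sigma^{\wedge k}$: since the algebra of reflexive forms is generated by the single element $\sigma$, every element of $H^0(M, \Omega_M^{[2k]})$ is a scalar multiple of $\sigma^{\wedge k}$, so $\dim H^0(M, \Omega_M^{[2k]}) \leq 1$. (For $k > n$ the form $\sigma^{\wedge k}$ vanishes, consistent with $\mathrm{IH}^{2k,0}(M) = 0$ in that range, but the stated inequality only claims $\le 1$.)

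The main obstacle is the first step: justifying the identification $\mathrm{IH}^{d,0}(M) \simeq H^0(M, \Omega_M^{[d]})$ for a singular irreducible symplectic variety. One needs the pure Hodge structure on $\mathrm{IH}^*(M,\BC)$ (already invoked in the paper, and available since $M$ is projective) together with a comparison of its lowest Hodge filtration piece $F^d \mathrm{IH}^d$ with global reflexive $d$-forms. For klt (here canonical) varieties this follows from the theory of the intersection cohomology Hodge module and the fact that its de Rham complex agrees in the relevant degree with reflexive differentials — alternatively, one can pass to a resolution $\widetilde M \to M$ and use that $\mathrm{IH}^{d,0}(M)$ is a direct summand of $H^{d,0}(\widetilde M) = H^0(\widetilde M, \Omega_{\widetilde M}^d)$, combined with the extension theorem for differential forms on klt pairs identifying $H^0(\widetilde M, \Omega_{\widetilde M}^d)$ with $H^0(M, \Omega_M^{[d]})$; a weight/strictness argument then pins down which summand is picked out. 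I would present this identification carefully, citing the relevant Hodge-theoretic input, and then the remaining argument via condition (ii) is short. Finally, by the Hodge symmetry $\mathrm{Ih}^{0,d}(M) = \mathrm{Ih}^{d,0}(M)$, the bound on $\mathrm{Ih}^{0,d}(M)$ follows as well, completing the proof.
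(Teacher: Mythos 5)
Your proposal is correct and follows essentially the same route as the paper: pass to a resolution $f\colon M'\to M$, use that $\mathrm{IH}^*(M,\BC)$ is a direct summand of $H^*(M',\BC)$ compatibly with Hodge structures by the decomposition theorem, identify $\bigoplus_d H^0(M',\Omega^d_{M'})$ with $\bigoplus_d H^0(M,\Omega^{[d]}_M)$ via the extension theorems for reflexive forms, and conclude from condition (ii) in the definition that the latter algebra is generated by $\sigma$. The only difference is that you aim for the full identification $\mathrm{IH}^{d,0}(M)\simeq H^0(M,\Omega^{[d]}_M)$, which is more than the lemma requires: the inequality $\mathrm{Ih}^{d,0}(M)\le h^{d,0}(M')=\dim H^0(M,\Omega^{[d]}_M)$ coming from the direct-summand property already suffices, so the ``weight/strictness'' step you flag as the main obstacle can be dropped entirely.
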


\begin{proof}
We take a resolution $f: M' \to M$ with $M'$ nonsingular and projective. Since the decomposition theorem associated with $f$ is compatible with Hodge structures, we obtain that 
\[
\mathrm{Ih}^{0,d}(M) \leq h^{0,d}(M'), \quad \mathrm{Ih}^{d,0}(M) \leq h^{d,0}(M').
\]
The bounds of Lemma \ref{lem2.2} follow from \cite{GKKP, GKP1, KS}, that
\[
\oplus_{d\geq 0} H^0(M', \Omega^d_{M'}) = \oplus_{d\geq 0} H^0(M, \Omega^{[d]}_M)
\]
where the latter is generated by $\sigma$ by the definition of irreducible symplectic varieties.
\end{proof}

Now we consider the class of the reflexive symplectic form
\[
\sigma \in H^0(M, \Omega_M^{[2]}).
\]
By \cite[Theorem 8]{Sch}, the cohomology $H^2(M, \BC)$ carries a pure Hodge structure of weight 2 whose $(2,0)$-component is recovered by $H^0(M, \Omega_M^{[2]})$. Therefore, the class of the reflexive symplectic form $\sigma$ gives rise to a class $\sigma \in H^{2,0}(M) \subset H^2(M,\BC)$ which induces a nilpotent operator on the intersection cohomology
\[
\sigma\cup: \mathrm{IH}^d(M, \BC) \rightarrow \mathrm{IH}^{d+2}(M, \BC), \quad \mathrm{IH}^{*,*} \mapsto \mathrm{IH}^{*+2,*}.
\]
The Fujiki relations imply the vanishing $q_M(\sigma)=0$, therefore $\sigma$ is an isotropic class. We consider the increasing filtration (\ref{filtration}) induced by $\sigma$:
\[
 P^\sigma_0\mathrm{IH}^\ast(M, \BC) \subset P^\sigma_1\mathrm{IH}^\ast(M, \BC) \subset \dots \subset P^\sigma_k\mathrm{IH}^\ast(M, \BC) \subset \dots \subset \mathrm{IH}^\ast(M, \BC).
\]
In view of Proposition \ref{Prop1.4} and Lemma \ref{lem2.2}, Proposition \ref{prop22} follows from the two inequalities:
\begin{equation}\label{main1}
\dim P^\sigma_0\mathrm{IH}^d(M, \BC) \leq \mathrm{Ih}^{d,0}(M), \quad \dim \mathrm{Gr}^\sigma_d \mathrm{IH}^d(M, \BC) \leq \mathrm{Ih}^{0,d}(M).
\end{equation}

\subsection{Lefschetz pairs}
Before proving (\ref{main1}), we show in this section that the topology of the Lagrangian fibration $\pi: M \to B$ constrains the action of isotropic classes on the intersection cohomology $\mathrm{IH}^*(M, \BC)$. This will serve as a main ingredient of the proof of Proposition \ref{prop22}.

We say that a pair of isotropic classes 
\[
(\gamma, \gamma') \in H^2(M, \BC) \times H^2(M, \BC), \quad q_M(\gamma) = q_M(\gamma') = 0
\]
is a \emph{Lefschetz pair} if the following holds:
\begin{enumerate}
    \item[(i)] the cup product with $\gamma'$ satisfies
    \[
    \gamma'\cup: P^\gamma_i\mathrm{IH}^j(M, \BC) \to P^\gamma_{i+2}\mathrm{IH}^{j+2}(M, \BC);
    \]
    \item[(ii)] for any $k$ the cup product with $\gamma'^k$ induces an isomorphism on the graded pieces
    \[
     \gamma'^k \cup: \mathrm{Gr}^\gamma_{n-k} \mathrm{IH}^d(M, \BC) \xrightarrow{\simeq} \mathrm{Gr}^\gamma_{n+k} \mathrm{IH}^{d+2k}(M, \BC).
    \]
\end{enumerate}

Lefschetz pairs arise naturally in Lagrangian fibrations. For instance, for two isotropic classes $L$ and $\eta$ with $L$ the pullback of an ample class on the base $B$ and $\eta$ a $\pi$-relative ample class, the relative hard Lefschetz theorem implies that $(L, \eta)$ is a Lefschetz pair.

\begin{prop}\label{Prop2.3}
Any pair of isotropic classes $(\gamma, \gamma')$ with $(\gamma, \gamma')_M \neq 0$ forms a Lefschetz pair. Here $(-. -)_M$ denotes the symmetric bilinear form given by the BBF form $q_M(-)$.
\end{prop}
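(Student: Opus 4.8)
The plan is to reduce the statement to the already-known case of a Lagrangian fibration by exploiting the monodromy action on $H^2(M,\BC)$ and the density argument of Proposition~\ref{Prop1.4}. First I would observe that the conditions defining a Lefschetz pair, namely (i) that $\gamma'\cup$ shifts the $\gamma$-filtration by two steps and (ii) that $\gamma'^k\cup$ induces hard-Lefschetz isomorphisms on the graded pieces of the $\gamma$-filtration, are \emph{constructible} conditions on the pair $(\gamma,\gamma')$ as it ranges over the appropriate $\mathrm{SO}(\Lambda_\BC)$-orbit. Indeed, the filtration $P^\gamma_\bullet\mathrm{IH}^*(M,\BC)$ is built from kernels and images of the nilpotent operators $\cup\gamma^j$ by the explicit formula \eqref{filtration}, and the maps in (i) and (ii) are linear algebra constructions in $\gamma$ and $\gamma'$; hence the locus where $(\gamma,\gamma')$ forms a Lefschetz pair is a constructible subset of the variety $Z = \{(\gamma,\gamma') : q_M(\gamma) = q_M(\gamma') = 0,\ (\gamma,\gamma')_M \neq 0\}$.

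Second, I would identify $Z$ as (an open subset of) a single orbit of the group $\mathrm{SO}(\Lambda_\BC)$ acting diagonally on $H^2(M,\BC)\times H^2(M,\BC)$: two pairs of isotropic vectors with the same nonzero value of the bilinear pairing are related by an element of $\mathrm{SO}(\Lambda_\BC)$ (this uses $b_2(M)$ large enough, which is implicit in the setup; one scales so the pairing value agrees, then applies Witt's theorem on the quadratic space). Third, exactly as in the proof of Proposition~\ref{Prop1.4}, the monodromy group $G_M^\circ$ acts on $\mathrm{IH}^*(M,\BC)$ compatibly with cup product, so if $(\gamma,\gamma')$ is a Lefschetz pair then so is $(g\gamma, g\gamma')$ for every $g\in G_M^\circ$; since $G_M^\circ$ is Zariski dense in $\mathrm{SO}(\Lambda_\BC)$, the $G_M^\circ$-orbit of any pair in $Z$ is Zariski dense in $Z$. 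A constructible subset of an irreducible variety that contains a Zariski-dense subset is everything; therefore it suffices to exhibit \emph{one} Lefschetz pair $(\gamma_0,\gamma_0')$ with $(\gamma_0,\gamma_0')_M\neq 0$.

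For that single pair I would take $\gamma_0 = L = \pi^*\beta$ the pullback of an ample class on the Lagrangian base, and $\gamma_0' = \eta$ a $\pi$-relative ample class, as already remarked in the text after the definition of Lefschetz pair: property (i) is the statement that a relative ample class shifts the perverse filtration (immediate from the description in Proposition~\ref{Prop1.3} / the perverse formalism), and property (ii) is precisely the relative hard Lefschetz theorem for $\pi$ applied to $\mathrm{IC}_M$, valid since $\pi$ is projective with equidimensional fibers. The only point needing a line of justification is that $(L,\eta)_M\neq 0$: since $L$ is isotropic and nonzero it pairs nontrivially with \emph{some} class, and $\eta$ being relatively ample together with the Fujiki-type relations forces $q_M(L+t\eta)$ to be a nonconstant polynomial in $t$ (its leading behaviour is controlled by $\pi$-fibers), which gives $(L,\eta)_M\neq 0$; alternatively one invokes that $L^n\cdot\eta^n\neq 0$ in intersection cohomology since $L$ restricts to an ample class on the base and $\eta$ to an ample class on the fibers.

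The main obstacle, and the step I would spend the most care on, is the constructibility claim: one must check that the filtration $P^\gamma_\bullet$, and consequently the graded maps in (ii), vary constructibly with $(\gamma,\gamma')$, i.e.\ that the ranks of $\cup\gamma^j$, the kernel/image intersections in \eqref{filtration}, and the induced maps $\gamma'^k\cup$ on graded pieces are constructible functions on $Z$. This is the same kind of semicontinuity/constructibility input used in Proposition~\ref{Prop1.4}, but here it must be applied to a two-parameter family and to the more intricate data of a filtered map rather than a plain dimension; I would handle it by noting that all the relevant subspaces are defined by vanishing of the evident polynomial (in the entries of $\gamma,\gamma'$) conditions on matrices, so that generic ranks are attained on a Zariski-open subset and all lower ranks on constructible subsets, whence "being a Lefschetz pair" is constructible. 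Once this is in place the density argument closes the proof.
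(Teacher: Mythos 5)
Your strategy coincides with the paper's: exhibit the single Lefschetz pair $(L,\eta)$ via the relative hard Lefschetz theorem, use Witt's theorem to see that pairs of isotropic classes with a fixed nonzero BBF pairing form one $\mathrm{SO}(\Lambda_\BC)$-orbit swept out by the scalings $(\lambda L,\mu\eta)$, and propagate the Lefschetz property along a Zariski-dense monodromy orbit. The points you elaborate (why $(L,\eta)_M\neq 0$, the role of Witt's theorem, which classes serve as the seed pair) are correct and are left implicit in the paper's proof.

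The one step that fails as written is the concluding inference: ``a constructible subset of an irreducible variety that contains a Zariski-dense subset is everything'' is false --- the complement of a point in $\BA^1$ is constructible and dense but not everything. A dense constructible set only contains a dense \emph{open} subset, so your argument as stated establishes the proposition only for pairs in some unspecified dense open subset of $D$ (your $Z$), whereas the claim is about \emph{every} pair. The paper closes this by asserting that conditions (i) and (ii) are Zariski \emph{closed} on $D$, for which density does imply equality. Alternatively, your constructibility suffices if you invoke monodromy invariance once more at the end: work on a fixed level set $Z_c=\{(x,y)\in D : (x,y)_M=c\}$, which is a single irreducible $\mathrm{SO}(\Lambda_\BC)$-orbit; the Lefschetz locus is constructible in $Z_c$ and contains the dense $G_M^\circ$-orbit of the scaling of $(L,\eta)$ with pairing $c$, hence contains a dense open $U_c\subset Z_c$; for an arbitrary $(x,y)\in Z_c$ the orbit $G_M^\circ\cdot(x,y)$ is dense in $Z_c$ and therefore meets $U_c$, and since the Lefschetz property is $G_M^\circ$-invariant, $(x,y)$ itself is a Lefschetz pair. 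Relatedly, your claim that the $G_M^\circ$-orbit of any pair is dense in all of $Z$ is not right: the monodromy preserves the BBF pairing, so each orbit lies in (and is dense in) its own level set $Z_c$, which is a proper closed subvariety of $Z$; the density argument must therefore be run level set by level set.
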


\begin{proof}
Recall the lattice $\Lambda$, the monodromy group $G_M$, and its subgroup $G^\circ_M$ from the proof of Proposition \ref{Prop1.4}. We consider the variety 
\[
D = \{(x,y)\in H^2(M, \BC)^{\times 2} \,|\, q_M(x)=q_M(y) = 0, \, (x, y)_M \neq 0\}
\]
where the group $\mathrm{SO}(\Lambda_\BC)$ acts diagonally. Since for a Lefschetz pair $(L, \eta)$ associated with a Lagrangian fibration $\pi: M \to B$ as above, all the pairs
\[
(\lambda L, \mu \eta), \quad \lambda, \mu \neq 0
\]
are Lefschetz, we obtain that any point in $D$ can be expressed as $(g\gamma, g\gamma')$ with $(\gamma, \gamma')$ a Lefschetz pair and $g \in \mathrm{SO}(\Lambda_\BC)$. On the other hand, the properties (i,ii) for Lefschetz pairs are expressed as Zariski closed conditions for points in the variety $D$, and are preserved under the monodromy group action. Therefore, the density of $G_M^\circ$ in $\mathrm{SO}(\Lambda_\BC)$ implies that any point in $D$ forms a Lefschetz pair. This completes the proof of the proposition.
\end{proof}

\subsection{Proof of Proposition \ref{prop22}}\label{Sec2.4}
In this section we show (\ref{main1}) and thus complete the proof of Proposition \ref{prop22}.

We first prove the second inequality of (\ref{main1}). By definition we have
\[
P^\gamma_{d-1}\mathrm{IH}^d(M, \BC) = \sum_{i\geq 1} \left(
\mathrm{Ker}(\sigma^{n+i-1}) \cap \mathrm{Im}(\sigma^{i-1}) \right) \cap \mathrm{IH}^d(M, \BC) \supset \mathrm{Ker}(\sigma^{n}).
\]
Hence we conclude that
\[
\dim\left(P^\gamma_{d}\mathrm{IH}^d(M, \BC)/P^\gamma_{d-1}\mathrm{IH}^d(M, \BC) \right) \leq \dim \left(\mathrm{IH}^d(M, \BC)/ \mathrm{Ker}(\sigma^n)\right) \leq  \mathrm{Ih}^{0,d}(M),
\]
which yields the desired inequality.

The following claim proves the the first inequality of (\ref{main1}).

\medskip
\noindent {\bf Claim.} $P^\sigma_0\mathrm{IH}^\ast(M, \BC) = \mathrm{IH}^{*,0}(M)$.

\begin{proof}[Proof of Claim]
Since the vector spaces $\mathrm{IH}^{*,0}(M)$ are spanned by powers of $\sigma$, we have immediately the inclusion 
\[
P^\sigma_0\mathrm{IH}^\ast(M, \BC) \supset \mathrm{IH}^{*,0}(M,\BC).
\]
Now assume that $P^\sigma_0\mathrm{IH}^\ast(M, \BC)$ is not contained in $\mathrm{IH}^{*,0}(M)$. Then we can find a nontrivial class $\gamma$ satisfying
\begin{equation}\label{gamma1}
\gamma \in P^\sigma_0\mathrm{IH}^d(M, \BC), \quad \gamma \in \bigoplus_{i>0} \mathrm{IH}^{d-i,i}(M, \BC).
\end{equation}
In particular, taking cup product with the complex conjugate $\overline{\sigma} \in H^{0,2}(M)$ of the reflexive symplectic form $\sigma$ together with the second equation of (\ref{gamma1}) yields the vanishing
\begin{equation}\label{contradiction}
\overline{\sigma}^n \gamma = 0 \in \mathrm{IH}^{2n+d}(M, \BC).
\end{equation}

On the other hand, $(\sigma, \overline{\sigma}) \in D$ forms a Lefschetz pair by Proposition \ref{Prop2.3}. Hence we deduce from the property (ii) of Lefschetz pairs that
\[
 \overline{\sigma}^n \cup: P^\sigma_0\mathrm{IH}^d(M, \BC)  \xrightarrow{\simeq} P^\sigma_{2n}\mathrm{IH}^{2n+d}(M, \BC)/P^\sigma_{2n-1}\mathrm{IH}^{2n+d}(M, \BC),
\]
which further implies that 
\[
\overline{\sigma}^n \gamma \neq 0
\]
for $\gamma$ as in (\ref{gamma1}). This contradicts (\ref{contradiction}), which completes the proof of the Claim.
\end{proof}

\section{Symplectic resolutions}

\subsection{LLV algebra for intersection cohomology}
We first recall the definition of the LLV algebra (extended to possibly singular varieties). 

Let $X$ be a projective variety of dimension $n$. An element $\alpha \in H^2(X, \BC)$ is called of \emph{Lefschetz type} if for any $k$, the cup product with $\alpha^k$ induces an isomorphism
\[\alpha^k \cup : \mathrm{IH}^{n - k}(X, \BC) \xrightarrow{\simeq} \mathrm{IH}^{n + k}(X, \BC).\]
In other words, the class $\alpha$ induces an $\mathfrak{sl}_2$-triple $(L_\alpha, H, \Lambda_\alpha)$ acting on $\mathrm{IH}^*(X, \BC)$. By the hard Lefschetz theorem for intersection cohomology, all ample classes are of Lefschetz type.

The LLV algebra of $X$, denoted by $\mathfrak{g}(X)$, is defined to be the Lie algebra generated by all~$\mathfrak{sl}_2$-triples associated with Lefschetz type classes. In the nonsingular case, the following result is due independently to Looijenga--Lunts and Verbitsky.

\begin{thm}[\cite{LL,Ver95,Ver96}] \label{thmllv}
For $M$ an irreducible symplectic manifold, there is a natural isomorphism of Lie algebras $\mathfrak{g}(M) \simeq \mathfrak{so}(b_2(M) + 2)$.
\end{thm}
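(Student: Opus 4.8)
The plan is to recover the classical Looijenga--Lunts--Verbitsky isomorphism by combining the abstract structure theory of the LLV algebra with the special geometry of the hyperk\"ahler metric. Since $M$ is a nonsingular projective irreducible symplectic manifold, $\mathrm{IH}^*(M,\BC) = H^*(M,\BC)$ and the Lefschetz-type classes are exactly the $\alpha \in H^2(M,\BC)$ for which hard Lefschetz holds; as this is a nonempty Zariski-open condition (ample classes qualify), such $\alpha$ form a Zariski-dense subset of $H^2(M,\BC)$.

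First I would establish the graded structure of $\mathfrak{g} := \mathfrak{g}(M)$, following Looijenga--Lunts. The degree operator $H$, acting on $H^k(M,\BC)$ by $k - 2n$, lies in $\mathfrak{g}$, and its adjoint action grades $\mathfrak{g}$; one checks that $\mathfrak{g}$ is semisimple (it is generated by $\mathfrak{sl}_2$-triples and $H$ lies in the derived algebra). Because cup product is commutative, the raising operators $L_\alpha$ mutually commute, so $\mathfrak{g}^2 := \{L_\alpha : \alpha \in H^2(M,\BC)\}$ is abelian with $[\mathfrak{g}^2,\mathfrak{g}^2] = 0$; the same holds for the lowering operators spanning $\mathfrak{g}^{-2}$. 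This forces the grading into degrees $\{-2,0,2\}$, giving $\mathfrak{g} = \mathfrak{g}^{-2}\oplus\mathfrak{g}^0\oplus\mathfrak{g}^2$ with $\mathfrak{g}^{\pm 2}\cong H^2(M,\BC)$ by density of Lefschetz classes, and $\mathfrak{g}^0 = \mathfrak{g}^0_{\mathrm{ss}}\oplus\BC H$. The bracket $\mathfrak{g}^2\times\mathfrak{g}^{-2}\to\mathfrak{g}^0$ is $\mathfrak{g}^0$-equivariant; its component along $\BC H$ defines a symmetric bilinear form on $H^2(M,\BC)$, while the residual part $R_{\alpha\beta} := [L_\alpha,\Lambda_\beta] - (\text{scalar})H$ lies in $\mathfrak{g}^0_{\mathrm{ss}}$ and acts on $\mathfrak{g}^2 = H^2(M,\BC)$ as an infinitesimal rotation of this form. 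Once the form is shown nondegenerate, the standard structure of a semisimple $3$-graded Lie algebra with abelian extremes and self-dual middle representation identifies $\mathfrak{g}$ with $\mathfrak{so}(H^2(M,\BC)\oplus U)$ for a hyperbolic plane $U$.

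The heart of the argument is to pin down the middle form and to prove that $\mathfrak{g}^0_{\mathrm{ss}}$ is the \emph{full} orthogonal algebra; this is where I expect the main obstacle. The Fujiki relation $\int_M \alpha^{2n} = c_M\, q_M(\alpha)^n$ controls the primitive decomposition of $H^*(M,\BC)$ under each single operator $L_\alpha$, and comparing structure constants identifies the symmetric form above with the BBF form $q_M$ up to scalar; in particular it is symmetric and nondegenerate, which rules out a symplectic (or other) middle structure and forces the orthogonal type $\mathfrak{so}(b_2(M)+2)$. For a general compact K\"ahler manifold $\mathfrak{g}^0_{\mathrm{ss}}$ may be a proper subalgebra (for abelian varieties it is symplectic), so the maximality statement $\mathfrak{g}^0_{\mathrm{ss}} = \mathfrak{so}(H^2(M,\BC), q_M)$ requires genuine geometric input: by Yau's theorem $M$ carries a hyperk\"ahler metric, and the associated twistor family produces a two-sphere of K\"ahler (hence Lefschetz) classes sweeping out a positive quadric in $H^2(M,\BR)$. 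As $\alpha,\beta$ range over these classes and over hyperk\"ahler rotations, the rotations $R_{\alpha\beta}$ generate all of $\mathfrak{so}(H^2(M,\BC), q_M)$; equivalently, Verbitsky's $\mathfrak{so}(4,1)$-action attached to the quaternionic triple $(\omega_I,\omega_J,\omega_K)$ sits inside $\mathfrak{g}^0$ and is propagated by monodromy to fill out the whole orthogonal algebra.

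Assembling the pieces, the three graded summands read $\mathfrak{g}^{\pm 2}\cong H^2(M,\BC)$ as two copies of the standard representation and $\mathfrak{g}^0 \cong \mathfrak{so}(H^2(M,\BC),q_M)\oplus\BC H$, which is precisely the $\mathrm{ad}(H)$-grading of $\mathfrak{so}(H^2(M,\BC)\oplus U)\cong\mathfrak{so}(b_2(M)+2)$; the isomorphism is natural since every step is built from the cup-product structure and the canonical BBF form. The main difficulty, as indicated, is the maximality of $\mathfrak{g}^0_{\mathrm{ss}}$, which is the one point where the hyperk\"ahler metric cannot be avoided and where the proof genuinely uses that $M$ is irreducible symplectic rather than merely K\"ahler.
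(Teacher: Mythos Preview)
The paper does not give its own proof of this theorem: it is quoted as a result of Looijenga--Lunts and Verbitsky, with the only commentary being that ``the proof of Theorem \ref{thmllv} uses hyper-K\"ahler metrics and quaternions.'' Your sketch is precisely an outline of that classical argument from the cited references \cite{LL,Ver95,Ver96}, so it is consistent with what the paper invokes.
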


The proof of Theorem \ref{thmllv} uses hyper-K\"ahler metrics and quaternions. Our goal is to extend this result to irreducible symplectic varieties admitting a symplectic resolution.

\subsection{Symplectic resolutions and BBF forms}

Let $M$ be an irreducible symplectic variety of dimension $2n$ admitting a symplectic resolution $f: M' \to M$. By \cite[Lemma 2.11]{Kal}, the map~$f$ is automatically semismall. Then, by the decomposition theorem for semismall maps~\cite{dCM00}, there is a canonical decomposition
\begin{equation} \label{eq:ss}
H^*(M', \BC) = \mathrm{IH}^*(M, \BC) \oplus V,
\end{equation}
where $V$ stands for the cohomology of those direct summands of $Rf_*\mathrm{IC}_{M'}$ supported on proper closed subsets of $M$. The identity \eqref{eq:ss} respects the pure Hodge structures on both sides. Moreover, the canonical inclusion
\begin{equation} \label{eq:inc}
\mathrm{IH}^*(M, \BC) \subset H^*(M', \BC)
\end{equation}
is a morphism of $H^*(M, \BC)$-modules via $f^*$.

We denote by 
\[q_M: H^2(M, \BC) \to \BC, \quad q_{M'}: H^2(M', \BC) \to \BC\]
the respective BBF form, normalized by $\sigma \in H^0(M, \Omega_M^{[2]})$ and $f^*\sigma \in H^0(M', \Omega_{M'}^2)$ such that
\[\int_M (\sigma\overline{\sigma})^n = \int_{M'} (f^*\sigma \overline{f^*\sigma})^n = 1.\]
By \cite[Lemma 23]{Sch}, there is the compatibility
\[q_M = q_{M'} \circ f^*.\]

\begin{lem} \label{lem:lef}
For $\alpha \in H^2(M, \BC)$, the following are equivalent:
\begin{enumerate}
    \item[(i)] $\alpha$ is of Lefschetz type on $M$;
    \item[(ii)] $f^*\alpha$ is of Lefschetz type on $M'$;
    \item[(iii)] $q_M(\alpha) = q_{M'}(f^*\alpha) \neq 0$.
\end{enumerate}
\end{lem}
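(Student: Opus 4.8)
The plan is to establish the cycle of implications (ii)$\Rightarrow$(i)$\Rightarrow$(iii)$\Rightarrow$(ii), relying on two facts recorded just above the statement: the semismall decomposition (\ref{eq:ss}), in which $\mathrm{IH}^*(M,\BC)\subset H^*(M',\BC)$ is an $H^*(M,\BC)$-submodule via $f^*$ by (\ref{eq:inc}), and the compatibility $q_M=q_{M'}\circ f^*$. In particular, cup product with $f^*\alpha$ on $H^*(M',\BC)$ preserves the subspace $\mathrm{IH}^*(M,\BC)$ and restricts there to cup product with $\alpha$; I will use this repeatedly without further comment.

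Two of the three implications are formal. For (ii)$\Rightarrow$(i): if $f^*\alpha$ is of Lefschetz type on $M'$, then for each $k$ the isomorphism $(f^*\alpha)^k\cup\colon H^{2n-k}(M',\BC)\to H^{2n+k}(M',\BC)$ restricts to an \emph{injection} on the submodule $\mathrm{IH}^{2n-k}(M,\BC)$, with image in $\mathrm{IH}^{2n+k}(M,\BC)$ and agreeing with $\alpha^k\cup$ there; since $\dim\mathrm{IH}^{2n-k}(M,\BC)=\dim\mathrm{IH}^{2n+k}(M,\BC)$ by Poincar\'e duality for intersection cohomology, this restriction is an isomorphism, so $\alpha$ is of Lefschetz type on $M$. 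For (i)$\Rightarrow$(iii): if $\alpha$ is of Lefschetz type on $M$, the $k=2n$ instance of the Lefschetz isomorphism, applied to $1\in\mathrm{IH}^0(M,\BC)$, shows $\alpha^{2n}\neq 0$ in $\mathrm{IH}^{4n}(M,\BC)\simeq\BC$. The summands making up $V$ in (\ref{eq:ss}) are (shifted) $\mathrm{IC}$-complexes of proper closed subvarieties of complex dimension $<2n$, so $V$ vanishes in degree $4n$ and $\mathrm{IH}^{4n}(M,\BC)\hookrightarrow H^{4n}(M',\BC)$ is an isomorphism; hence $(f^*\alpha)^{2n}\neq 0$ in $H^{4n}(M',\BC)$, i.e.\ $\int_{M'}(f^*\alpha)^{2n}\neq 0$. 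By the Fujiki relation on the hyper-K\"ahler manifold $M'$ one has $\int_{M'}(f^*\alpha)^{2n}=c\, q_{M'}(f^*\alpha)^n$ with $c\neq 0$, whence $q_M(\alpha)=q_{M'}(f^*\alpha)\neq 0$. (Alternatively one may quote the Fujiki relation directly on $M$ from \cite[Theorem 2]{Sch}.)

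The remaining implication (iii)$\Rightarrow$(ii) reduces, by taking $\beta=f^*\alpha$, to the manifold-level assertion that a class $\beta\in H^2(M',\BC)$ is of Lefschetz type on the irreducible symplectic \emph{manifold} $M'$ if and only if $q_{M'}(\beta)\neq 0$. I would derive this from Theorem \ref{thmllv}: identifying $\mathfrak{g}(M')\simeq\mathfrak{so}(b_2(M')+2)$ with $\mathfrak{so}(H^2(M',\BC)\oplus\langle e,f\rangle)$ for a hyperbolic plane $\langle e,f\rangle$ orthogonal to $H^2(M',\BC)$, with the cohomological grading induced by $e\wedge f$ and the degree $+2$ part equal to $\mathfrak{g}(M')_2=e\wedge H^2(M',\BC)$, a short bracket computation in $\mathfrak{so}$ shows that $L_\beta\leftrightarrow e\wedge\beta$ can be completed to an $\mathfrak{sl}_2$-triple with the grading element as neutral element precisely when $q_{M'}(\beta)\neq 0$; by the Jacobson--Morozov theorem this is equivalent to $\beta$ being of Lefschetz type.

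The one genuinely non-formal ingredient is this last equivalence on the smooth model $M'$, and I expect it to be the main point requiring care: it uses the full strength of Theorem \ref{thmllv}, including the identification of the degree $+2$ part of $\mathfrak{g}(M')$ with $H^2(M',\BC)$. Everything else is routine once one checks that (\ref{eq:ss}) is a decomposition compatible with the $H^*(M,\BC)$-action and that $V$ is concentrated in cohomological degrees $1,\dots,4n-1$, both of which follow from the decomposition theorem for semismall maps together with the smoothness of $M'$.
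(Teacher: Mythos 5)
Your proof is correct and follows the same cycle (ii)$\Rightarrow$(i)$\Rightarrow$(iii)$\Rightarrow$(ii) as the paper, using the same two ingredients: the $H^*(M,\BC)$-module inclusion \eqref{eq:inc} for (ii)$\Rightarrow$(i) and the Fujiki relations for (i)$\Rightarrow$(iii). The only difference is that for (iii)$\Rightarrow$(ii) the paper simply cites \cite[Lemma 2.5]{SY}, whose content is exactly the smooth-model statement you re-derive from Theorem \ref{thmllv}.
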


\begin{proof}
The implication (i) $\Rightarrow$ (iii) follows from the Fujiki relations \cite[Theorem 2]{Sch}. As \eqref{eq:inc} is an inclusion of $H^*(M, \BC)$-modules, we also have (ii) $\Rightarrow$ (i). Finally, (iii) $\Rightarrow$ (ii) is \cite[Lemma~2.5]{SY}.
\end{proof}

\begin{rmk}
If $\alpha \in H^2(M, \BC)$ is an ample class, then the implication (i) $\Rightarrow$ (ii) also follows from the more general result \cite[Theorem 2.3.1]{dCM00} about lef line bundles.
\end{rmk}

\subsection{Proof of Theorems \ref{thm0.5} and \ref{thm0.4}}

Let $f: M' \to M$ be the symplectic resolution. We first prove Theorem \ref{thm0.4} by identifying the LLV algebra $\mathfrak{g}(M)$ with a Lie subalgebra of its counterpart $\mathfrak{g}(M')$.

Consider an element $\alpha \in H^2(M, \BC)$ of Lefschetz type, or equivalently, a Lefschetz $\mathfrak{sl}_2$-triple $(L_\alpha, H, \Lambda_\alpha)$ acting on $\mathrm{IH}^*(M, \BC)$. By Lemma \ref{lem:lef}, the class $f^*\alpha$ is also of Lefschetz type, hence a Lefschetz $\mathfrak{sl}_2$-triple $(L_{f^*\alpha}, H, \Lambda_{f^*\alpha})$ acting on $H^*(M', \BC)$. Since the inclusion~\eqref{eq:inc} is compatible with $H^2(M, \BC)$-actions, the restriction of $L_{f^*\alpha}$ to $\mathrm{IH}^*(M, \BC)$ is just $L_\alpha$. In other words, the matrix of $L_{f^*\alpha}$ under the decomposition \eqref{eq:ss} is block upper triangular. The same is true for $\Lambda_{f^*\alpha}$.

\begin{lem} \label{lem:linalg}
The restriction of $\Lambda_{f^*\alpha}$ to $\mathrm{IH}^*(M, \BC)$ is $\Lambda_\alpha$.
\end{lem}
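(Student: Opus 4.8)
The plan is to exploit the fact, established just before the lemma, that both $L_{f^*\alpha}$ and $\Lambda_{f^*\alpha}$ are block upper triangular with respect to the decomposition $H^*(M',\BC) = \mathrm{IH}^*(M,\BC)\oplus V$, and that $L_{f^*\alpha}$ restricted to the $\mathrm{IH}$-block is exactly $L_\alpha$. Write the $\mathfrak{sl}_2$-triple $(L_{f^*\alpha},H,\Lambda_{f^*\alpha})$ on $H^*(M',\BC)$ in block form: $L_{f^*\alpha} = \begin{pmatrix} L_\alpha & \ast \\ 0 & L_V\end{pmatrix}$, $\Lambda_{f^*\alpha} = \begin{pmatrix} \Lambda' & \ast \\ 0 & \Lambda_V\end{pmatrix}$, and $H = \begin{pmatrix} H_M & \ast \\ 0 & H_V\end{pmatrix}$, where the grading operator $H$ measures cohomological degree and is therefore diagonal with respect to the bigrading; in particular $H_M$ is the grading operator on $\mathrm{IH}^*(M,\BC)$ shifted to be centered, i.e. the same $H$ appearing in the Lefschetz triple $(L_\alpha,H,\Lambda_\alpha)$ on $\mathrm{IH}^*(M,\BC)$. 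The claim is precisely that $\Lambda' = \Lambda_\alpha$.

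First I would take the commutator relation $[L_{f^*\alpha},\Lambda_{f^*\alpha}] = H$ and read off its $\mathrm{IH}$-block (the top-left corner): since both matrices are block upper triangular, the top-left block of the product $L_{f^*\alpha}\Lambda_{f^*\alpha}$ is $L_\alpha \Lambda'$ and similarly for the other order, so we obtain $[L_\alpha, \Lambda'] = H$ acting on $\mathrm{IH}^*(M,\BC)$. Likewise the relations $[H,L_{f^*\alpha}] = 2L_{f^*\alpha}$ and $[H,\Lambda_{f^*\alpha}] = -2\Lambda_{f^*\alpha}$, restricted to the top-left block, give $[H,\Lambda'] = -2\Lambda'$. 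Thus $(L_\alpha, H, \Lambda')$ is itself an $\mathfrak{sl}_2$-triple acting on $\mathrm{IH}^*(M,\BC)$, with the \emph{same} $L_\alpha$ and $H$ as the triple $(L_\alpha,H,\Lambda_\alpha)$. Now I invoke uniqueness: given a nilpotent $L_\alpha$ and a fixed semisimple $H$ with $[H,L_\alpha]=2L_\alpha$, the element $\Lambda$ completing them to an $\mathfrak{sl}_2$-triple is \emph{unique} (this is the standard $\mathfrak{sl}_2$-representation-theory fact: the $\Lambda$-operator of a Lefschetz-style triple is determined by $L$ and $H$, e.g. by the $\mathfrak{sl}_2$-module decomposition, or because any two candidates differ by an element commuting with the whole triple and lowering degree, which must vanish). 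Hence $\Lambda' = \Lambda_\alpha$, which is the assertion.

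The one genuine point to nail down — and what I expect to be the only real obstacle — is the uniqueness of $\Lambda$ given $(L,H)$, or equivalently why the top-left block $\Lambda'$ is forced to be the honest Lefschetz dual operator rather than something that merely satisfies the bracket relations formally. The cleanest route is: decompose $\mathrm{IH}^*(M,\BC)$ into irreducible $\mathfrak{sl}_2$-modules under $(L_\alpha,H,\Lambda')$; on each irreducible string $H$ fixes the module (it records the degree), $L_\alpha$ is the raising operator with prescribed weights, and on an irreducible $\mathfrak{sl}_2$-module the lowering operator is uniquely determined by the raising operator and the weight grading. Since the \emph{same} statement holds for $(L_\alpha,H,\Lambda_\alpha)$ and the raising operator $L_\alpha$ and grading $H$ are literally the same operators, the two lowering operators must agree on each isotypic piece, hence agree. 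Alternatively one can argue that $\Lambda_\alpha - \Lambda'$ commutes with $L_\alpha$ and with $H$ (the first from subtracting the two copies of $[L_\alpha,-]=H$, the second from $[H,-]=-2(\cdot)$ forcing it into $H$-weight $-2$, then combined with $\mathrm{ad}\,L_\alpha$-nilpotency to conclude it is $0$); I would present whichever is shorter. Everything else is pure bookkeeping with block-triangular matrices.
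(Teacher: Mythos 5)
Your uniqueness argument is fine in itself: granting that $\Lambda_{f^*\alpha}$ is block upper triangular with respect to $H^*(M',\BC)=\mathrm{IH}^*(M,\BC)\oplus V$, reading off the top-left blocks of the three bracket relations shows that $(L_\alpha,H,\Lambda')$ is an $\mathfrak{sl}_2$-triple with the same $(L_\alpha,H)$ as $(L_\alpha,H,\Lambda_\alpha)$, and Kostant's uniqueness lemma (the difference is killed by $\operatorname{ad}L_\alpha$ and has $\operatorname{ad}H$-weight $-2$, i.e.\ is a highest-weight vector of negative weight, hence zero) forces $\Lambda'=\Lambda_\alpha$. The genuine problem is your input. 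The sentence in the paper just before the lemma, ``The same is true for $\Lambda_{f^*\alpha}$,'' is an assertion rather than something already established: unlike $L_{f^*\alpha}$, whose preservation of $\mathrm{IH}^*(M,\BC)$ follows from the $H^*(M,\BC)$-module structure of the inclusion, the operator $\Lambda_{f^*\alpha}$ is defined only through the $\mathfrak{sl}_2$-triple on all of $H^*(M',\BC)$, and an $L$- and $H$-stable subspace of an $\mathfrak{sl}_2$-module need not be $\Lambda$-stable (think of the highest-weight line in the standard representation). In the paper's logic that stability is precisely what the proof of the lemma delivers, together with the identification: one writes $\gamma\in\mathrm{IH}^d(M,\BC)$ in its primitive decomposition $\gamma=\sum_j L_\alpha^j\gamma_j$, notes that the same decomposition is primitive for $L_{f^*\alpha}$ inside $H^d(M',\BC)$ because primitivity is the degree condition $L^{2n-d+2j+1}\gamma_j=0$, checkable using $L$ and $H$ alone, and applies the closed formula $\Lambda(L^j\gamma_j)=j(2n-d+j+1)L^{j-1}\gamma_j$ on both sides. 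So, as written, your proof assumes the nontrivial half of what is to be proved.

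The gap is fillable in the spirit of your argument, but you must fill it. For example: cup product with $\alpha\in H^2(M,\BC)$ acts on the hypercohomology of $Rf_*\mathrm{IC}_{M'}$ functorially in the complex, hence block \emph{diagonally} with respect to the direct sum decomposition furnished by the decomposition theorem; since $f^*\alpha$ satisfies hard Lefschetz on $H^*(M',\BC)$ and $\alpha$ does on the summand $\mathrm{IH}^*(M,\BC)$, it does on $V$ as well, and the direct sum of the two lowering operators then completes $(L_{f^*\alpha},H)$ to an $\mathfrak{sl}_2$-triple, so by the very uniqueness you invoke it equals $\Lambda_{f^*\alpha}$, which is therefore block diagonal with top-left block $\Lambda_\alpha$. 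With that supplement your route becomes a legitimate alternative to the paper's: the paper obtains stability and identification simultaneously from the explicit primitive-decomposition formula, whereas you obtain them from functoriality of the cup-product action plus uniqueness of $\mathfrak{sl}_2$-completions.
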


\begin{proof}
Take $\gamma \in \mathrm{IH}^d(M, \BC)$. Since $\alpha$ is of Lefschetz type, there is the primitive decomposition
\[\gamma = \sum_{j}L_\alpha^j \gamma_j\]
with $\gamma_j \in \mathrm{IH}^{d - 2j}(M, \BC)$ satisfying $L_\alpha^{2n - d + 2j + 1}\gamma_j = 0$. Then, via the inclusion~\eqref{eq:inc}, we obtain the same primitive decomposition
\[\gamma = \sum_{j}L_{f^*\alpha}^j \gamma_j\]
this time viewed as in $H^d(M', \BC)$. By definition we have
\[\Lambda_{f^*\alpha}\gamma = \sum_{j}j(2n - d + j + 1)L_{f^*\alpha}^{j - 1}\gamma_j \in H^{d - 2}(M', \BC).\]
The lemma follows by comparing with
\[\Lambda_{\alpha}\gamma = \sum_{j}j(2n - d + j + 1)L_{\alpha}^{j - 1}\gamma_j \in \mathrm{IH}^{d - 2}(M, \BC). \qedhere\]
\end{proof}

Let $\mathfrak{g}' \subset \mathfrak{g}(M')$ denote the Lie subalgebra generated by all $\mathfrak{sl}_2$-triples associated with Lefschetz type classes in $f^*H^2(M, \BC) \subset H^2(M', \BC)$. The proof of Theorem \ref{thmllv} actually yields a natural isomorphism of Lie algebras $\mathfrak{g}' \simeq \mathfrak{so}(b_2(M) + 2)$; see \emph{e.g.}~\cite[Theorem 11.1]{Ver95}. On the other hand, the assignment
\[L_\alpha \mapsto L_{f^*\alpha}, \quad \Lambda_\alpha \mapsto \Lambda_{f^*\alpha}\]
together with Lemma \ref{lem:linalg} induces a surjective morphism of Lie algebras
\begin{equation} \label{eq:g'}
\mathfrak{g'} \to \mathfrak{g}(M).
\end{equation}

To prove that \eqref{eq:g'} is an isomorphism, we consider the subalgebra $\overline{H}^*(M, \BC) \subset H^*(M, \BC)$ generated by $H^2(M, \BC)$. Since $H^2(M, \BC)$ is pure, we also have inclusions
\[\overline{H}^*(M, \BC) \subset \mathrm{IH}^*(M, \BC) \subset H^*(M', \BC)\]
where the composition is induced by $f^*$. Let $\mathfrak{g}''$ denote the structure Lie algebra of $\overline{H}^*(M, \BC)$; see \cite[Section 8]{Ver95} for the terminology. By definition, the inclusion $\overline{H}^*(M, \BC) \subset \mathrm{IH}^*(M, \BC)$ induces a surjective morphism of Lie algebras
\[\mathfrak{g}(M) \to \mathfrak{g}''.\]
Moreover, the classification of reduced Lefschetz--Frobenius algebras \cite[Theorem 10.1]{Ver95} shows that $\mathfrak{g}'' \simeq \mathfrak{so}(b_2(M) + 2)$, and that the composition $\mathfrak{g}' \to \mathfrak{g}(M) \to \mathfrak{g}''$ is a natual isomorphism. This proves Theorem \ref{thm0.4}.

Once Theorem \ref{thm0.4} is established, the proof of the ``perverse = Hodge'' Theorem \ref{thm0.5} is identical to the nonsingular case \cite{SY}. It makes use of three key facts:
\begin{enumerate}
\item[(i)] cupping with the pullback of an ample class $L = \pi^*\beta$ controls the perverse filtration on $\mathrm{IH}^*(M, \BC)$; this is Proposition \ref{Prop1.3};

\item[(ii)] cupping with the reflexive symplectic form $\sigma$ (or its complex conjugate $\overline{\sigma}$) controls the Hodge filtration on $\mathrm{IH}^*(M, \BC)$; this follows from the analogous statement for the nonsingular $M'$ \cite[Theorem 1.4]{Ver90} together with the isomorphism of Hodge structures~\eqref{eq:ss};

\item[(iii)] one can deform the $\mathfrak{sl}_2$-triple associated with $L$ to the one associated with $\sigma$ inside the semisimple Lie algebra $\mathfrak{g}(M)$.
\end{enumerate}
We refer to \cite{SY} for the precise arguments. \qed

\begin{rmk}
{When $b_2(M)$ is at least 5, one can prove Theorem \ref{thm0.5} without any Lie algebra action. In fact, by Proposition \ref{Prop1.4}, it suffices to find two isotropic classes $\gamma_1,\gamma_2\in H^2(M,\BC)$ such that the associated filtrations $P_k^{\gamma_1},P_k^{\gamma_2}$ on $\mathrm{IH}^*(M,\BC)$ correspond respectively to the perverse and Hodge filtration. By Proposition \ref{Prop1.3}, one can take $\gamma_1=\pi^*\beta$ for some ample class $\beta\in H^2(B,\BC)$. For the Hodge filtration, one takes the complex conjugate of the reflexive symplectic form $\gamma_2=\overline{\sigma}\in H^2(M, \BC)$. The filtration $P_k^{f^*\overline{\sigma}}$ on $H^*(M', \BC)$ being the Hodge filtration, one sees that $P_k^{\overline{\sigma}}$ on $\mathrm{IH}^*(M, \BC)$ is the Hodge filtration via the isomorphism of Hodge structures \eqref{eq:ss}.}

\end{rmk}
\begin{rmk}
We expect both Theorems \ref{thm0.5} and \ref{thm0.4} to hold true for arbitrary irreducible symplectic varieties $M$. But proving them would amount to resolving two issues. One is to find a purely algebraic proof of the Looijenga--Lunts--Verbitsky Theorem \ref{thmllv} (without using hyper-K\"ahler metrics). The other is to show that the cup product with $\sigma$ controls the Hodge filtration on $\mathrm{IH}^*(M, \BC)$, which is a priori not clear.
\end{rmk}

\end{document}